\newtheorem{theorem}{Theorem}[section]
\newtheorem*{lemma*}{Lemma}
\newtheorem{proposition}[theorem]{Proposition}
\newtheorem{corollary}[theorem]{Corollary}
\theoremstyle{definition}
\newtheorem{definition}[theorem]{Definition}
\newtheorem{example}[theorem]{Example}
\newtheorem{question}[theorem]{Question}
\theoremstyle{remark}
\newtheorem{remark}[theorem]{Remark}
\numberwithin{equation}{section}
\newcommand{\abs}[1]{\lvert#1\rvert}
\newcommand{\norm}[1]{\lVert#1\rVert}
\newcommand{\A}{\mathbb{A}}
\newcommand{\C}{\mathbb{C}}
\newcommand{\Q}{\mathbb{Q}}
\newcommand{\U}{\mathbb{U}}
\newcommand{\D}{\mathbb{D}}
\newcommand{\E}{\mathcal{E}}
\newcommand{\R}{\mathbb{R}}
\newcommand{\T}{\mathbb{T}}
\newcommand{\Ar}{\mathscr{A}}
\newcommand{\W}{\mathscr{W}}
\newcommand{\dtext}{\textnormal d}
\newcommand{\onto}{\stackrel{{\rm \tiny{onto}}}{\longrightarrow}}
\DeclareMathOperator{\diam}{diam}
\DeclareMathOperator{\re}{Re}
\DeclareMathOperator{\im}{Im}
\DeclareMathOperator{\id}{id}
\DeclareMathOperator{\loc}{loc}
\def\XXint#1#2#3{{\setbox0=\hbox{$#1{#2#3}{\int}$}
\vcenter{\hbox{$#2#3$}}\kern-.5\wd0}}
\def\le{\leqslant}
\def\ge{\geqslant}
\begin{document}

\title[Hopf differentials and smoothing Sobolev homeomorphisms]{Hopf differentials and \\ smoothing Sobolev homeomorphisms}

\author{Tadeusz Iwaniec}
\address{Department of Mathematics, Syracuse University, Syracuse, NY 13244, USA
and Department of Mathematics and Statistics, University of Helsinki, Finland}
\email{tiwaniec@syr.edu}
\thanks{Iwaniec was supported by the NSF grant DMS-0800416.}

\author{Leonid V. Kovalev}
\address{Department of Mathematics, Syracuse University, Syracuse,
NY 13244, USA}
\email{lvkovale@syr.edu}
\thanks{Kovalev was supported by the NSF grant DMS-0968756.}

\author{Jani Onninen}
\address{Department of Mathematics, Syracuse University, Syracuse,
NY 13244, USA}
\email{jkonnine@syr.edu}
\thanks{Onninen was supported by the NSF grant  DMS-1001620.}

\subjclass[2000]{Primary 46E35; Secondary 30E10, 58E20}

\date{June 26, 2010}

\keywords{Approximation, Sobolev homeomorphisms, Hopf differential, harmonic mappings}

\begin{abstract} 
We prove that planar homeomorphisms can be approximated by diffeomorphisms 
in the Sobolev space $\W^{1,2}$ and in the Royden algebra. 
As an application, we show that every  discrete and open planar mapping with a holomorphic Hopf differential is harmonic. 
\end{abstract}
\maketitle
\section{Introduction}

It is a fundamental property of Sobolev spaces $\W^{1,p}$, $1\le p<\infty$, that any element can be approximated 
strongly (i.e., in the norm) by $\mathscr C^{\infty}$ smooth functions, or by piecewise affine ones. 
In the context of vector-valued Sobolev functions, that is, mappings in
$\W^{1,p}(\Omega,\R^n)$, invertibility comes into play. Indeed, the studies of invertible Sobolev mappings are 
of great importance in nonlinear elasticity \cite{Ba0, FG, MST, Sv}.  The following 
natural question was put forward by John M. Ball.

\begin{question}\cite{Ba}
If $u \in \W^{1,p}(\Omega, \R^n)$ is invertible, can $u$ be approximated in $ \W^{1,p}$ by piecewise affine invertible mappings?
\end{question}

J. Ball attributes this question to L.C. Evans, who was led to it through his investigation of the  
partial regularity of minimizers~\cite{Ev} of neohookean energy functionals~\cite{Ba1, BPO1, CL, SiSp}. We provide an affirmative solution of the Ball-Evans problem in the case $p=n=2$. The most general formulation of our result administers Royden algebras $\Ar(\Omega)$ and $\Ar_\circ (\Omega)$, see Section~\ref{secroy}. We write
\[\E[h]=\E_\Omega[h]:= \norm{Dh}^2_{\mathscr L^2(\Omega)}= \int_\Omega \abs{Dh(z)}^2\, \dtext z \]
where $\abs{Dh}$ is the Hilbert-Schmidt norm of the differential.

\begin{theorem}[Approximation by diffeomorphisms]\label{thmapprox} Let $h \colon \Omega \onto \Omega^\ast$ be a homeomorphism of Sobolev class $\mathscr W^{1,2}_{\loc} (\Omega, \Omega^\ast)$. Then for every $\epsilon >0$ there exist a diffeomorphism $H \colon \Omega\onto \Omega^\ast$ such that
\begin{enumerate}[(i)]
\item $H-h \in \Ar_\circ (\Omega)$
\item $ \norm{H-h}_{\Ar (\Omega)}\le \epsilon$
\item $\E[H] \le \E[h]$.
\end{enumerate}
\end{theorem}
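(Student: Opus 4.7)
The plan is to build $H$ in three stages: first produce a piecewise affine homeomorphism $h_\delta$ close to $h$ on a fine triangulation of $\Omega$; second, smooth $h_\delta$ to a diffeomorphism by mollification; third, invoke harmonic replacement on small regions to enforce the energy inequality $\E[H]\le \E[h]$. The Royden-algebra conditions (i)--(ii) will be automatic once all modifications are confined to a compact subset of $\Omega$, so the technical weight lies in the quantitative approximation and in the energy bound (iii).

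\emph{Stage 1 (piecewise affine approximation).} I would fix a triangulation of mesh $\delta$, translated so that its vertices are Lebesgue points of $h$; a Fubini argument over translations of a reference triangulation guarantees that almost every translate works. Let $h_\delta$ be affine on each triangle with vertex values equal to those of $h$. The first delicate task is to show $h_\delta$ is a homeomorphism. Using that planar $\W^{1,2}$ homeomorphisms are absolutely continuous on a.e.\ line and monotone along generic lines, for almost every translated triangulation $h$ maps each triangle boundary to a Jordan curve enclosing the image of its interior, and these interiors are pairwise disjoint. A winding-number argument then transfers injectivity from $h$ to $h_\delta$, and standard telescoping estimates give $h_\delta \to h$ in $\Ar(\Omega)$ together with $\E[h_\delta]\to \E[h]$ as $\delta\to 0$.

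\emph{Stage 2 (smoothing).} Next I would convolve $h_\delta$ with a symmetric mollifier $\rho_\varepsilon$, $\varepsilon \ll \delta$, so only an $O(\varepsilon)$-neighborhood of the $1$-skeleton is affected. Across each edge the mollified map is a convex combination of the two adjacent orientation-preserving affine maps, which keeps its Jacobian strictly positive; a partition-of-unity construction handles the vertices, upgrading the mollification to a global diffeomorphism with a small, controlled increase in energy.

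\emph{Stage 3 and main obstacle.} To recover $\E[H]\le \E[h]$, I would then subdivide further into small disks and replace the current smooth map by its harmonic extension on each disk. By the Dirichlet principle this only decreases energy, and since each small disk has $\mathscr C^2$-near-elliptical, hence convex, image, the Rad\'o--Kneser--Choquet theorem forces each harmonic extension to be a diffeomorphism; the pieces patch continuously since boundary values are preserved. A diagonal choice of $\delta$ and $\varepsilon$ then produces $H$ with the required properties. The principal obstacle is Stage~1: showing the affine interpolant of a Sobolev homeomorphism is again a homeomorphism. Sobolev homeomorphisms are defined only modulo null sets, so the naive affine interpolation can produce overlaps or collapses; overcoming this is exactly where the planar restriction $p=n=2$ is decisive, via line-monotonicity and winding-number tools that have no higher-dimensional analogue.
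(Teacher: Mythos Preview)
Your proposal has two genuine gaps, and its overall architecture differs from the paper's in a way that matters.

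\textbf{Stage 1 is essentially the whole problem.} The claim that the affine interpolant $h_\delta$ of a $\W^{1,2}$ homeomorphism is again a homeomorphism is precisely the hard part of the Ball--Evans question, and your winding-number sketch does not settle it. Two adjacent triangles $T_1,T_2$ share an edge $e=[v_1,v_2]$; the third vertices $v_3,v_4$ lie on opposite sides of $e$, hence $h(v_3),h(v_4)$ lie on opposite sides of the \emph{curve} $h(e)$. For $h_\delta$ to be injective you need them on opposite sides of the \emph{segment} $[h(v_1),h(v_2)]$, which nothing in your argument forces. Degenerate (collinear) image triples are also not excluded. Monotonicity along generic lines does not rescue this: the obstruction is two-dimensional overlap of image triangles, not one-dimensional folding. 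The paper never attempts affine interpolation at all.

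\textbf{Stage 3 is circular and does not deliver (iii).} Harmonic replacement on a family of small disks yields a map that is only continuous, not $\mathscr C^1$, across the disk boundaries, so you have destroyed the diffeomorphism produced in Stage~2. Moreover, Dirichlet's principle gives $\E[\text{new}]\le \E[\text{current smooth map}]$, not $\E[\text{new}]\le \E[h]$; since Stage~2 already pushed the energy above $\E[h_\delta]\approx \E[h]$, there is no mechanism to land below $\E[h]$. Iterating replace--smooth does not converge for the same reason.

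\textbf{How the paper avoids both issues.} The paper reverses your order: it uses harmonic replacement as the \emph{first} and principal step, on preimages of convex target regions (dyadic squares, then doubly convex lenses around edges, then disks around vertices), so that Rad\'o--Kneser--Choquet plus Lewy's theorem yield a piecewise harmonic homeomorphism that is already a diffeomorphism on each piece and has energy \emph{strictly} below $\E[h]$ (assuming $h$ is not harmonic). This strict drop creates a budget $\delta>0$. The subsequent smoothing across the interfaces is done not by mollification but by explicit Munkres-type local constructions (their Section~5) with uniform control on $\abs{Dg}$ and $\det Dg$; the interface neighborhoods are then chosen thin enough that the smoothing costs less than $\delta/2$ in $\norm{D\cdot}_{\mathscr L^2}$. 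Thus the energy inequality (iii) is obtained by \emph{spending} a guaranteed gain, not by a final replacement step. Your Stage~2 edge argument (convex combination of two affine maps sharing an edge has positive Jacobian) is correct, but the vertex problem you wave at with ``partition of unity'' is exactly where naive mollification of piecewise-linear homeomorphisms is known to fail; the paper handles vertices by a separate harmonic replacement on disks followed by smoothing along circles.
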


Part (iii) is nontirivial only in the finite energy case, $\E_\Omega [h]< \infty$. 
Let us note that the existence of smooth approximation implies the existence of piecewise-affine approximation, since a diffeomorphism can be triangulated. (In the converse direction, a piecewise-affine mapping can be smoothed in 
dimensions less than four~\cite{Mu}, but not in general.)  
Partial results toward the Ball-Evans problem were obtained in~\cite{Mo} (for planar bi-Sobolev mappings that are smooth outside of a finite set) and in~\cite{BM} (for planar bi-H\"older mappings, with approximation in the H\"older norm). 
The articles~\cite{Ba,SS} illustrate the difficulty of preserving invertibility in the process of smoothing a Sobolev homeomorphism.  

We also give an application of Theorem~\ref{thmapprox} to a problem that originated in a series of papers 
by Eells, Lemaire and Sealey~\cite{EL2, EL3, Se}. It concerns the nonlinear differential equation 
\begin{equation}\label{heq2}
\frac{\partial}{ \partial \bar z} \left( h_z \overline{h_{\bar z}} \right)=0
\end{equation}
for mappings defined in a domain in the complex plane $\C$. Naturally, the Sobolev space $\W^{1,2}_{\loc}(\Omega, \C)$ should be considered as the domain of definition of equation~\eqref{heq2}. This places $ h_z \overline{h_{\bar z}}$ in $\mathscr L^1_{\loc} (\Omega)$, so the complex Cauchy-Riemann derivative $\frac{\partial}{\partial \bar z}$ applies in the sense of distribution. By Weyl's lemma $h_z \overline{h_{\bar z}}$ is a holomorphic function.

The expression $Q_h:=h_z \overline{h_{\bar z}}\, \dtext z \otimes \dtext z$ is known as the Hopf differential of $h$ (named after H. Hopf, who employed a similar device, e.g., in \cite[Chapter VI]{Ho}).
It is clear that $Q_h$ is a holomorphic quadratic differential whenever $h$ is harmonic, which is a general fact about energy-stationary  mappings between Riemannian manifolds~\cite[(10.5)]{EL1},~\cite{Job} and~\cite{Stb}. Eells and Lemaire inquired about the possibility of a converse result, e.g., for mappings with finite energy and almost-everywhere positive Jacobian~\cite[(2.6)]{EL2}. In this setting a counterexample was provided by Jost~\cite{Jo}, who also proved the existence of $\W^{1,2}$-solutions of~\eqref{heq2} in every homotopy class of mappings between compact Riemann surfaces. A more restricted form of the Eells-Lemaire problem, \cite[(5.11)]{EL3} and~\cite{Se}, imposed the additional assumption that $h$ is a quasiconformal homeomorphism, and was settled by H\'elein~\cite{He} in the affirmative. Here we dispose with the quasiconformality condition and treat general planar homeomorphisms of finite energy. Since the inverse of such a homeomorphism need not be in any Sobolev class~\cite{HK}, some difficulties are to be expected. They shall be overcome with the aid of our approximation theorem~\ref{thmapprox}.

\begin{theorem}\label{thmhopf}
Every continuous, discrete and open mapping $h$ of Sobolev class  $\W^{1,2}_{\loc}(\Omega, \C)$ that satisfies equation~\eqref{heq2} is  harmonic. 
\end{theorem}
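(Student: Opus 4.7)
My plan is to reduce to the case where $h$ is locally a homeomorphism, then use Theorem~\ref{thmapprox} to pass a first-variation identity from smooth approximants to the limit.

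Since $h$ is continuous, discrete, and open in the plane, Stoilow's theorem gives a local factorization $h = F \circ \phi$ with $F$ holomorphic and $\phi$ a homeomorphism, so the branch set $B_h \subset \Omega$ is discrete. On $\Omega \setminus B_h$ the map $h$ is a local homeomorphism. Discrete sets in the plane have vanishing $\W^{1,2}$-capacity and are therefore removable for distributional solutions of $\Delta h = 0$ in $\W^{1,2}_{\loc}$, so it suffices to prove weak harmonicity on every subdomain $U \Subset \Omega \setminus B_h$ on which $h|_U \colon U \onto h(U)$ is a homeomorphism; Weyl's lemma then promotes this to classical harmonicity on all of~$\Omega$.

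Apply Theorem~\ref{thmapprox} to $h|_U$ to obtain diffeomorphisms $H_n \colon U \onto h(U)$ with $H_n - h \in \Ar_\circ(U)$, $\norm{H_n-h}_{\Ar(U)} \to 0$, and $\E[H_n] \le \E[h|_U]$. Royden-algebra convergence forces uniform and $\W^{1,2}$-convergence, so the Hopf differentials $Q_{H_n} = (H_n)_z \overline{(H_n)_{\bar z}}$ converge to $Q_h$ in $L^1(U)$. The classical inner-variation formula, valid for smooth $H$ and any $\xi \in \mathscr{C}^\infty_c(U,\C)$, reads
\[
\frac{d}{dt}\bigg|_{t=0} \E\bigl[H \circ (\id + t\xi)\bigr] = -4\,\re \int_U Q_H\,\xi_{\bar z},
\]
and since $H_n \circ (\id + t\xi) = H_n + t\, DH_n(\xi) + o(t)$ in $\W^{1,2}$, this equals the first variation of $\E[H_n + t\, DH_n(\xi)]$. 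Letting $n \to \infty$: the right-hand side converges to $-4\,\re \int_U Q_h \xi_{\bar z}$, which vanishes by holomorphy of $Q_h$ (Weyl's lemma) and integration by parts against the compactly supported $\xi$; the left-hand side, after integration by parts performed on the smooth $H_n$ before passing to the limit, converges to the first variation of $\E$ at $h$ in the outer direction $Dh(\xi) = h_z \xi + h_{\bar z}\bar\xi$. Thus $h$ is stationary with respect to every outer variation of the form $Dh(\xi)$, $\xi \in \mathscr{C}^\infty_c(U,\C)$.

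The expected main obstacle is to upgrade this partial stationarity to stationarity along arbitrary test directions $\eta \in \mathscr{C}^\infty_c(U,\C)$, which is precisely the weak Euler--Lagrange equation $\Delta h = 0$. For each diffeomorphic approximant $H_n$ the Jacobian $J_{H_n}$ is strictly positive, so the linear system $DH_n(\xi_n) = \eta$ has a smooth compactly supported solution $\xi_n = (\overline{(H_n)_z}\,\eta - (H_n)_{\bar z}\,\bar\eta)/J_{H_n}$, and the analogous exact identity $2\re\int (H_n)_z \overline{\eta_{\bar z}} + (H_n)_{\bar z} \overline{\eta_z} = -4\re \int Q_{H_n}(\xi_n)_{\bar z}$ holds. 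Passing this identity to the limit is delicate because $J_h$ need not be bounded below and $(DH_n)^{-1}$ may concentrate on the degeneracy set $\{J_h = 0\}$; the uniform Royden-algebra control and the energy inequality $\E[H_n] \le \E[h|_U]$ supplied by Theorem~\ref{thmapprox} are the ingredients that should control this concentration. Once $B(h, \eta) = 0$ is established for every test $\eta$, equivalently $\Delta h = 0$ in $\mathscr{D}'(U)$, Weyl's lemma finishes the proof.
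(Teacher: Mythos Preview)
Your reduction to a local homeomorphism via Stoilow and the invocation of Theorem~\ref{thmapprox} match the paper's opening moves. The divergence is in how you try to extract harmonicity, and your chosen route has a real gap.

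Your first-variation identity for the diffeomorphic approximants $H_n$ is correct: with $\xi_n = (DH_n)^{-1}(\eta)$ one has
\[
4\,\re\int \bigl((H_n)_z\overline{\eta_z}+(H_n)_{\bar z}\overline{\eta_{\bar z}}\bigr)
= -4\,\re\int Q_{H_n}\,(\xi_n)_{\bar z}.
\]
The left side converges to the weak pairing $\langle \Delta h,\eta\rangle$ because $DH_n\to Dh$ in $L^2$. The entire argument therefore rests on showing that the right side tends to zero. But $(\xi_n)_{\bar z}$ involves both $1/J_{H_n}$ and the second-order derivatives of $H_n$; Royden-algebra convergence controls only $\norm{H_n-h}_{\mathscr C}$ and $\norm{DH_n-Dh}_{L^2}$, and the energy bound $\E[H_n]\le\E[h]$ adds nothing further. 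Since $J_h$ may vanish on a set of positive measure (nothing in the hypotheses prevents this), $1/J_{H_n}$ can blow up in $L^1$, and there is no mechanism here to stop $Q_{H_n}(\xi_n)_{\bar z}$ from concentrating mass. Integrating by parts does not help either: it trades $(\xi_n)_{\bar z}$ for $(Q_{H_n})_{\bar z}$, which carries $\Delta H_n$. Your sentence ``these are the ingredients that should control this concentration'' is precisely the point where the proof is missing, and I do not see how to supply it along these lines.

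The paper avoids this obstruction altogether. Rather than attacking the Euler--Lagrange equation, it proves the \emph{minimality} inequality $\E_{\Q}[h]\le\E_{\Q}[H]$ for every homeomorphic competitor $H\in h+\Ar_\circ(\Q)$, from which harmonicity follows by harmonic replacement. The key is a normalization you never make: after a conformal change of variable the Hopf differential becomes $h_z\overline{h_{\bar z}}\equiv 1$ on a unit square $\Q$, and then for a diffeomorphic competitor $H$ one computes $\E[H]-\E[h^j]$ by the substitution $w=\chi^j(z)$ with $\chi^j=H^{-1}\circ h^j$. A chain of elementary inequalities (H\"older, plus $\int J(z,\chi^j)\le 1$) reduces the difference to $4\bigl[\int_{\Q}|\chi_y|\bigr]^2-4$, and the boundary condition $\chi=\id$ on $\partial\Q$ forces $\int_0^1|\chi(x,1)-\chi(x,0)|\,dx=1$, closing the estimate. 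The specific constant value of the Hopf differential is what makes this endgame work; a purely variational-identity approach that ignores the form of $F$ is unlikely to succeed.
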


The failure of Theorem~\ref{thmhopf} for uniform limits of homeomorphisms should be mentioned. This 
is illustrated by Example~\ref{ex}.

\section{Background}\label{secroy}
Let $\Omega$ be a bounded domain in $\R^2 \simeq \C$, nonempty open connected set. We consider a class $\Ar (\Omega)$ of uniformly continuous functions $h \colon {\Omega} \to \C$ having finite Dirichlet energy, and furnish it with the norm
\[\| h\|_{\Ar (\Omega)} = \| h \|_{\mathscr C ({\Omega})} + \| Dh \|_{\mathscr L^2 (\Omega)} < \infty  \]
$\Ar (\Omega)$ is a commutative Banach algebra with the usual multiplication of functions in which $\norm{h_1h_2}_{\Ar (\Omega)} \le \norm{h_1}_{\Ar (\Omega)} \norm{h_2}_{\Ar (\Omega)}$. The closure of $\mathscr C^\infty_\circ (\Omega)$ in $\Ar (\Omega)$ will be denoted by $\Ar_\circ (\Omega)$. Suppose, to look at more specific situation, that $\Omega = \U$ is a Jordan domain; that is, a simply connected open set whose boundary $\Gamma= \partial \U$ is a closed Jordan curve. By goodness of the Carath\'eodory extension theorem~\cite[p. 18]{Pob}, there is a homeomorphism $\varphi \colon \overline{\mathbb D} \stackrel{\textrm{\tiny{onto}}}{\longrightarrow} \overline{\U}$ of the closed unit disk $\overline{\D}= \{ \xi \colon \abs{\xi} \le 1 \}$ that is conformal in $\D$. After the change of variable, $z= \varphi (\xi)$, we obtain a function $H(\xi)= h\big(\varphi (\xi)\big)$ in $\Ar (\D)$. The operation
\[\mathbf{T}_\varphi \colon \Ar (\U) \to \Ar (\D)\]
so defined is an isometry; $\norm{\mathbf{T}_\varphi h}_{\Ar (\D)} = \norm{h}_{\Ar (\U)}$. Furthermore,
\[\mathbf{T}_\varphi \colon \Ar_\circ (\U) \to \Ar_\circ (\D)\]
\begin{proposition}[A generalization of Poisson's formula]\label{prpoisson}
Let $\U$ be a Jordan domain. There is (unique) bounded linear operator
\[\mathbf{P}_{\U} \colon \Ar(\U) \to \Ar(\U)\]
such that
\[
\begin{cases} \mathbf{P}_{\U} - \mathbf Id\colon \Ar (\U) \to \Ar_\circ (\U)\\
\Delta \circ \mathbf P_{\U}=0
\end{cases}
\]
\end{proposition}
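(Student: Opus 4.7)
The plan is to reduce the problem to the unit disk through the isometry $\mathbf{T}_\varphi$ of Section~\ref{secroy}, solve the classical Dirichlet problem on $\D$ by the Poisson integral, and conjugate the resulting operator back to $\U$.

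Let $H\in\Ar(\D)$. Uniform continuity extends $H$ to $\mathscr C(\overline{\D})$, so the boundary values $H|_{\partial\D}$ form a continuous function. Define
\[
\mathbf{P}_\D H(z)\, =\, \frac{1}{2\pi}\int_0^{2\pi} \frac{1-|z|^2}{|e^{i\theta}-z|^2}\, H(e^{i\theta})\, d\theta, \qquad z\in\D.
\]
Classical Poisson theory shows that $\mathbf{P}_\D H$ is harmonic in $\D$ and extends continuously to $\overline{\D}$ with the same boundary values as $H$; the maximum principle then gives $\|\mathbf{P}_\D H\|_{\mathscr C(\overline{\D})}\le \|H\|_{\mathscr C(\overline{\D})}$. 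Since $H\in\W^{1,2}(\D)$ has a boundary trace in $H^{1/2}(\partial\D)$, the Dirichlet principle yields $\E_\D[\mathbf{P}_\D H]\le \E_\D[H]$ together with $\mathbf{P}_\D H-H\in \W^{1,2}_0(\D)$. Thus $\mathbf{P}_\D\colon\Ar(\D)\to\Ar(\D)$ is bounded with norm at most $1$, and $\Delta\circ \mathbf{P}_\D=0$.

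The technical heart of the argument is the upgrade from $\mathbf{P}_\D H-H\in\W^{1,2}_0(\D)$ to $\mathbf{P}_\D H-H\in \Ar_\circ(\D)$. Set $g=\mathbf{P}_\D H-H$; it is continuous on $\overline{\D}$, vanishes on $\partial\D$, and has finite Dirichlet energy. For $0<r<1$ consider the dilate
\[
g_r(z)\, =\, \begin{cases} g(z/r), & |z|\le r,\\ 0, & r<|z|\le 1,\end{cases}
\]
which is continuous on $\overline{\D}$ (thanks to the boundary vanishing of $g$) and identically zero on the annulus $r<|z|\le 1$. Uniform continuity of $g$ gives $g_r\to g$ uniformly as $r\nearrow 1$, while a change of variables combined with $\mathscr L^2$-continuity of dilations gives $Dg_r\to Dg$ in $\mathscr L^2(\D)$. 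Convolving each $g_r$ with a mollifier of scale $\delta<1-r$ produces $\mathscr C^\infty_\circ(\D)$ approximants of $g$ in the $\Ar$ norm.

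For a general Jordan domain, fix a Carathéodory conformal map $\varphi\colon\overline{\D}\to\overline{\U}$ and define $\mathbf{P}_\U=\mathbf{T}_\varphi^{-1}\circ\mathbf{P}_\D\circ \mathbf{T}_\varphi$. Conformal invariance of the Laplacian (up to a positive conformal factor, which preserves $\Delta u=0$) and of the Dirichlet integral, together with the fact that $\mathbf{T}_\varphi$ is an isometry identifying $\Ar_\circ(\U)$ with $\Ar_\circ(\D)$, transfers both defining conditions from $\D$ to $\U$. Uniqueness is immediate from the maximum principle: if $\mathbf{P}_1$ and $\mathbf{P}_2$ both satisfy the conclusion, then $\mathbf{P}_1 h-\mathbf{P}_2 h$ is harmonic, continuous on $\overline{\U}$, and vanishes on $\partial\U$ (being an element of $\Ar_\circ(\U)$), hence is identically zero. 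The step I expect to require the most care is the $\Ar$-norm approximation of $g$ by $\mathscr C^\infty_\circ(\D)$ functions, where one must control the sup norm and the Dirichlet energy simultaneously; the dilation-then-mollification scheme is chosen precisely because these two controls interact with support and smoothing in complementary ways.
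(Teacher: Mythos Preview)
Your proposal is correct and follows exactly the approach the paper sketches: reduce to the disk via the conformal isometry $\mathbf{T}_\varphi$, invoke the classical Poisson integral, and then justify the one nontrivial point the paper singles out---namely that $\mathbf{P}_{\D}h-h\in\Ar_\circ(\D)$---which you handle by the standard dilation-then-mollification argument. This is precisely the ``routine verification'' the paper leaves to the reader, and your treatment of the key step is sound.
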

We name $\mathbf P_{\U}$ the {\it Poisson operator}. The energy of $\mathbf P_{\U} h$ does not exceed that of $h$. This fact is known as  {\it Dirichlet's principle}
\[\int_{\U} \abs{D \mathbf P_{\U} h}^2 \le \int_{\U} \abs{D h}^2\]
The proof of this proposition reduces to the case when $\U=\D$, by conformal change of variables. A routine verification of this case  is left to the reader. We only indicate that the less familiar property $\mathbf{P}_{\D} h -h \in \Ar_\circ (\D)$, for $h\in \Ar (\D)$, needs to be justified.
\begin{corollary}[Harmonic replacement]
Let $\Omega$ be  a domain in $\C$ and $\U \subset \overline{\U} \subset \Omega$ a Jordan domain. There exists (unique) bounded linear operator
\[\mathbf R_{\U} \colon \Ar (\Omega) \to \Ar (\Omega)\]
such that, for every $h\in \Ar (\Omega)$
\[\begin{cases} \mathbf R_{\U}h=h \qquad \mbox{ on }\quad \Omega \setminus \U \\
\Delta  \mathbf R_{\U}h= 0  \qquad \mbox{ in } \quad \U
\end{cases}\]
The Laplace equation yields $\E_{\Omega} [\mathbf R_{\U} h] \le \E_{\Omega} [h]$. Equality occurs if and only if $h$ is harmonic in $\U$.
\end{corollary}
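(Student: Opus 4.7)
The plan is to define $\mathbf{R}_{\U}$ by harmonic replacement on $\U$ and the identity elsewhere. Since $\overline{\U} \subset \Omega$, the restriction $h|_{\overline{\U}}$ belongs to $\Ar(\U)$, so I set
\[
\mathbf{R}_{\U}h(z) =
\begin{cases}
h(z), & z \in \Omega\setminus \U, \\
\bigl(\mathbf{P}_{\U}(h|_{\overline{\U}})\bigr)(z), & z\in \overline{\U}.
\end{cases}
\]
The first point is that the two prescriptions agree on $\partial\U$. This is exactly where Proposition~\ref{prpoisson} enters: $\mathbf{P}_{\U}(h|_{\overline{\U}}) - h|_{\overline{\U}} \in \Ar_\circ(\U)$, and since elements of $\Ar_\circ(\U)$ are uniform limits of $\mathscr C^\infty_\circ(\U)$-functions, they extend by $0$ to $\partial\U$. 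Hence $\mathbf{R}_{\U}h$ is well-defined and continuous on $\Omega$, and even uniformly continuous, being so on each of the closed pieces $\overline{\U}$ and $\Omega\setminus \U$.

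Next I would verify $\mathbf{R}_{\U}h \in \Ar(\Omega)$ by writing it additively. Let $g := \mathbf{P}_{\U}(h|_{\overline{\U}}) - h|_{\overline{\U}} \in \Ar_\circ(\U)$ and let $\widetilde g$ be its extension by zero to $\Omega$. Approximating $g$ in the $\Ar(\U)$-norm by $\varphi_k \in \mathscr C^\infty_\circ(\U)$, the zero-extensions $\widetilde{\varphi}_k$ lie in $\mathscr C^\infty_\circ(\Omega)$ and converge to $\widetilde g$ in $\Ar(\Omega)$, so $\widetilde g \in \Ar_\circ(\Omega)$. Thus $\mathbf{R}_{\U}h = h + \widetilde g \in \Ar(\Omega)$, with distributional gradient equal to $Dh$ on $\Omega \setminus \U$ and to $D(\mathbf{P}_{\U}h)$ on $\U$. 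Dirichlet's principle from Proposition~\ref{prpoisson} then yields
\[
\E_\Omega[\mathbf{R}_{\U}h] = \E_{\Omega\setminus\U}[h] + \E_{\U}[\mathbf{P}_{\U}h] \le \E_{\Omega\setminus\U}[h] + \E_{\U}[h] = \E_{\Omega}[h].
\]
Linearity and boundedness of $\mathbf{R}_{\U}$ inherit from those of $\mathbf P_{\U}$.

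Uniqueness is straightforward: any other operator with the stated properties sends $h$ to a function that on $\overline{\U}$ is harmonic with boundary values $h|_{\partial\U}$, and the Dirichlet problem on a Jordan domain has at most one such solution. For the equality case, I decompose $h = \mathbf{P}_{\U}h + g$ on $\U$ with $g \in \Ar_\circ(\U)$; since $\mathbf{P}_{\U}h$ is harmonic and $g$ has zero boundary trace, integration by parts eliminates the cross term and gives $\E_\U[h] = \E_\U[\mathbf{P}_{\U}h] + \E_\U[g]$. Therefore $\E_\Omega[\mathbf{R}_{\U}h] = \E_\Omega[h]$ is equivalent to $\E_\U[g]=0$, i.e., to $g \equiv 0$, i.e., to $h$ being harmonic in $\U$.

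The main obstacle I anticipate is showing that $\widetilde g$ genuinely belongs to $\W^{1,2}(\Omega)$ across the potentially irregular Jordan curve $\partial\U$—there is no a priori trace theory on such boundaries. This is why I route the argument through $\Ar_\circ$ rather than through a direct Sobolev trace/gluing lemma: the density of $\mathscr C^\infty_\circ(\U)$ in $\Ar_\circ(\U)$ reduces the gluing to the trivial smooth case, for which zero-extension preserves $\W^{1,2}$ membership automatically.
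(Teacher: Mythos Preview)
Your proof is correct and follows essentially the same route as the paper: define $\mathbf{R}_{\U}h = h + [\mathbf{P}_{\U}h - h]_\circ$, where $[\cdot]_\circ$ denotes zero extension, and appeal to $\mathbf{P}_{\U}h - h \in \Ar_\circ(\U)$ from Proposition~\ref{prpoisson} to justify that the extension lands in $\Ar(\Omega)$. You have simply supplied more of the details the paper leaves implicit (the approximation argument for $\widetilde g \in \Ar_\circ(\Omega)$, uniqueness, and the orthogonality computation for the equality case).
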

A short proof of this corollary runs somewhat as follows. The unique harmonic extension of $h \colon \partial \U \to \C$ inside $\U$  given by $\mathbf P_{\U} h$  has the property that $\mathbf P_{\U} h-h \in \Ar_\circ (\U)$. Therefore, the zero extension of $\mathbf P_{\U} h -h$ outside $\U$, denoted by $[\mathbf P_{\U}h-h]_\circ$, belongs to $\Ar(\Omega)$. We define
\[\mathbf{R}_{\U} h:= [\mathbf P_{\U} h -h]_\circ +h  \in \Ar (\Omega)\]
The desired properties of the operator $\mathbf R_{\U}$ so defined are automatically fulfilled in view of Proposition~\ref{prpoisson}.
\begin{proposition}\label{RKC}
Let $\Omega$ be  a domain in $\C$ and $\U \subset \overline{\U} \subset \Omega$ a Jordan domain. Suppose that $h\in \mathscr \Ar (\Omega)$ is a homeomorphism of $\Omega$ onto  $h(\Omega)$ and $h(\U)$ is convex. Then
$\mathbf R_\U h$ is homeomorphism in $\Omega$ and is a harmonic diffeomorphism in $\U$.
\end{proposition}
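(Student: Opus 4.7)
The plan is to identify $\tilde h := \mathbf{R}_\U h$ with the harmonic extension of the boundary homeomorphism $h|_{\partial \U}$, apply the classical Radó--Kneser--Choquet theorem to get a diffeomorphism inside $\U$, and glue to $h$ on the complement using invariance of domain. By the very construction of $\mathbf{R}_\U$ (namely $\mathbf P_\U h - h \in \Ar_\circ(\U)$ followed by zero extension), $\tilde h$ agrees with $h$ outside $\U$, is harmonic inside $\U$, and is continuous across $\partial \U$ where it attains the boundary values of $h|_{\partial \U}$. Since $h\colon \Omega \onto h(\Omega)$ is a homeomorphism and $\overline{\U} \subset \Omega$ is compact, $h$ maps $\overline{\U}$ homeomorphically onto $\overline{h(\U)}$ and hence the Jordan curve $\partial \U$ homeomorphically onto $\partial h(\U)$, the boundary of the convex domain $h(\U)$.

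To handle $\tilde h$ on $\U$, I pull back to the disk via a Carathéodory parametrization $\varphi \colon \overline{\D} \onto \overline{\U}$, conformal in $\D$, and set $g := \tilde h \circ \varphi$. Then $g$ is continuous on $\overline{\D}$, harmonic in $\D$ (precomposition with a holomorphic map preserves harmonicity), and $g|_{\partial \D}$ is a homeomorphism of $\partial \D$ onto the boundary of the convex set $h(\U)$. The Radó--Kneser--Choquet theorem now asserts that $g$ is a diffeomorphism from $\D$ onto $h(\U)$. Post-composing with $\varphi^{-1} \colon \U \to \D$, itself conformal and a diffeomorphism, produces $\tilde h|_\U$ as a harmonic diffeomorphism from $\U$ onto $h(\U)$.

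For global injectivity, partition $\Omega = \U \sqcup \partial \U \sqcup (\Omega \setminus \overline{\U})$. The respective images under $\tilde h$ are $h(\U)$, $\partial h(\U)$, and $h(\Omega) \setminus \overline{h(\U)}$, which are pairwise disjoint; and $\tilde h$ is injective on each piece (from Radó--Kneser--Choquet on $\U$, and from $\tilde h = h$ on the other two). Thus $\tilde h$ is a continuous injection of the planar open set $\Omega$ into $\C$, and invariance of domain upgrades it to a homeomorphism onto its image, which by the decomposition equals $h(\Omega)$. The only non-trivial input is the Radó--Kneser--Choquet theorem itself; the convexity of $h(\U)$ is the essential hypothesis, without which the harmonic extension could fold over itself and univalence would fail. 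Everything beyond the quotation of that theorem is bookkeeping, ensuring that the interior and exterior pieces of $\tilde h$ paste together into an injective continuous map.
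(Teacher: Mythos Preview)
Your argument is correct and follows essentially the same route as the paper, which simply cites the Rad\'o--Kneser--Choquet theorem for injectivity of $\mathbf R_\U h$ in $\U$ and Lewy's theorem for the diffeomorphism property of planar harmonic homeomorphisms. The only minor point: you attribute the diffeomorphism conclusion directly to Rad\'o--Kneser--Choquet, whereas the paper (and most references) separates this into two steps---RKC gives univalence, and Lewy's theorem then gives nonvanishing Jacobian---so it would be cleaner to name Lewy explicitly.
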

The injectivity of $\mathbf R_\U h$ is the content of the Rad\'o-Kneser-Choquet Theorem~\cite[p. 29]{Dub}.  Furthermore, planar harmonic homeomorphisms are $\mathscr C^\infty$-smooth diffeomorphisms according to Lewy's theorem~\cite[p. 20]{Dub}.

\section{Smoothing Sobolev homeomorphisms, Theorem~\ref{thmapprox}}
\begin{proof}[Proof of Theorem~\ref{thmapprox}]
We may and do assume that $h$ is not harmonic, since otherwise $H =h$ satisfies the desired properties by Lewy's theorem (mentioned above). Let $z_\circ \in \Omega$ be a point such that $h$ fails to be harmonic in any neighborhood of $z_\circ$. By choosing the origin of the coordinate system we ensure that $h(z_\circ)$ does not lie on the boundary of any dyadic squares associated with the coordinate system. 

Let us choose and fix any $\epsilon >0$. The construction of $H$ proceeds in 5 steps. We construct homeomorphisms $h_k \colon \Omega \onto \Omega^\ast$, $k=0, \dots, 5$ such that $h_0=h$, $h_k\in h_{k-1}+\Ar_\circ(\Omega)$, $h_5$ is a diffeomorphism, and $\norm{h_k-h_{k-1}}$ is bounded by a multiple of $\epsilon$ for each $k$. In each step we modify the previous construction to gain better regularity. In steps 1, 2 and 4 we use harmonic replacement according to Proposition~\ref{RKC}. 
In steps 3 and 5 we smoothen the mapping near the boundaries of the domains in which harmonic replacement was performed.  
The result of each step is denoted by $h_1,\dots,h_5$. The \textsf{finite energy case} $h\in \Ar(\Omega)$ requires a few additional details, which are provided at the end of each step. 

We begin with a decomposition of the target domain
\begin{equation}
\Omega^\ast = \bigcup_{\nu=1}^\infty \overline{\Q_{\nu}}
\end{equation}
into closed nonoverlapping dyadic squares $\overline{\Q_{\nu}} \subset \Omega^\ast$.
This decomposition is made by selecting the maximal dyadic squares that lie  in $\Omega^\ast$. Thus the cover of $\Omega^\ast$ by such squares is locally finite. The preimage of $\Q_\nu$ under $h$, denoted by $\U_\nu$, is a Jordan domain in $\Omega$. Hereafter $\U_\nu$ will be referred to as the curved-square. In fact to every partion of $\Omega^\ast$ into closed squares there will correspond a partition of $\Omega$ into closed curved-squares via the mapping $h \colon \Omega \stackrel{\textrm{\tiny{onto}}}{\longrightarrow} \Omega^\ast$, for example:
\[\Omega = \bigcup_{\nu=1}^\infty \overline{\U_{\nu}}\]
{\bf Step 1.} For each $\U_\nu$ we replace $h \colon \overline{\U_\nu} \stackrel{\textrm{\tiny{onto}}}{\longrightarrow} \overline{\Q_{\nu}}$ with a piecewise harmonic homeomorphism $h_1 \colon \overline{\U_\nu} \stackrel{\textrm{\tiny{onto}}}{\longrightarrow} \overline{\Q_{\nu}} $ that coincides with $h$ on $\partial  \overline{\U_{\nu}}$. To this effect we partition the square $ \overline{\Q_{\nu}} $,
\begin{equation}
 \overline{\Q_{\nu}} =  \overline{\Q^1_{\nu}} \cup  \overline{\Q^2_{\nu}} \cup \dots \cup  \overline{\Q^n_{\nu}}, \qquad (n=n_\nu=4^{k_\nu})
\end{equation}
into congruent dyadic squares $ \overline{\Q^i_{\nu}}$, $i=1, \dots, n$. The number $n$, depending on $\nu$, will be determined later. For the moment fix $\nu$ and look at the homeomorphisms
\[h \colon  \overline{\U^i_\nu} = h^{-1}(\overline{\Q^i_{\nu}}) \stackrel{\textrm{\tiny{onto}}}{\longrightarrow} \overline{\Q^i_{\nu}}
\]
These mappings belong to the Royden algebra $\Ar ( \overline{\U^i_\nu})$. With the aid of Propositions~\ref{prpoisson} and~\ref{RKC} we replace each    $h \colon \overline{\U^i_\nu} \to  \overline{\Q^i_\nu}$ with a harmonic homeomorphism $h^i_\nu \colon \overline{\U^i_\nu} \stackrel{\textrm{\tiny{onto}}}{\longrightarrow}   \overline{\Q^i_\nu}$ that coincides with $h$ on $\partial  \overline{\U^i_\nu}$, $i=1,2, \dots , n$. Such mappings are $\mathscr C^\infty$-smooth diffeomorphisms  $h^i_\nu \colon {\U^i_\nu} \stackrel{\textrm{\tiny{onto}}}{\longrightarrow}  {\Q^i_\nu}$. Moreover, $h^i_\nu -h \in \Ar_\circ (\U_\nu^i)$ and
\begin{equation}
\begin{cases}
\E_{\U^i_\nu}[h_\nu^i] \le \E_{\U^i_\nu}[h] \qquad \mbox{for } 1,2, \dots , n \\
 \E_{\partial \U^i_\nu}[h_\nu^i] = \E_{\partial \U^i_\nu}[h], \quad \mbox{ because } h_\nu^i=h \mbox{ on } \partial \U_\nu^i
\end{cases}
\end{equation}
We  obtain a piecewise harmonic homeomorphism by gluing $h^i_\nu$  together along the common boundaries of $\U^i_\nu$. Denote it by
\[h^n_{\nu} \colon  \overline{\U_\nu}  \stackrel{\textrm{\tiny{onto}}}{\longrightarrow} \overline{\Q_{\nu}} \]
\[h^n_{\nu} \in h + \Ar_\circ (\U_\nu)\]
Precisely we define
\[h_\nu^n=h+ \sum_{i=1}^n [h_\nu^i-h]_\circ\]
Here and in the sequel the notation $[\varphi]_\circ$ for $\varphi\in \Ar_\circ(\U)$ stands for zero extension of $\varphi$ to the entire domain $\Omega$. Obviously $[\varphi]_\circ \in \Ar_\circ(\Omega)$.  The above construction depends on the number $n$.  For $\nu$ fixed we actually have a sequence $\{h^n_{\nu}\}_{n=1,2, \dots}$ that is bounded in $\Ar (\U_\nu)$. However, we have uniform bounds independent of $n$,
\[\norm{h^n_{\nu}}_{\mathscr C(\overline{\U_\nu})} \le \diam \Q_\nu \]
and
\[\E_{\U_\nu} [h^n_{\nu}] \le \E_{\U_\nu} [h]\]
The key observation is that
\begin{equation}
\begin{cases}
h^n_{\nu}-h \in \Ar_\circ (\U_\nu)\\
\lim\limits_{n \to \infty} \norm{h^n_{\nu}-h}_{\Ar (\U_\nu)}=0
\end{cases}
\end{equation}
Indeed, for $z\in \overline{\U^i_\nu}$ we have
\[\abs{h^n_{\nu}(z)-h(z)} \le \diam \Q_\nu^i = \frac{1}{\sqrt{n}} \diam \Q_\nu\]
Thus $h^n_{\nu } \rightrightarrows h$ uniformly on $\overline{\U_\nu}$ as $n \to \infty$. On the other hand the differential matrices $Dh^n_{\nu}$ are bounded in $\mathscr L^2 (\U_\nu, \R^{2\times 2})$. Their weak limit exits and is  exactly equal to $Dh$, because the mappings converge uniformly to $h$. Therefore,
\[
\begin{split}
\int_{\U_\nu} \abs{Dh^n_{\nu}-Dh}^2 & = \int_{\U_\nu} \Big(\abs{Dh^n_{\nu}}^2 + \abs{Dh}^2-2 \langle Dh^n_{\nu}, Dh \rangle  \Big)\\
& \le  2 \int_{\U_\nu} \Big(\abs{Dh}^2 -   \langle Dh^n_{\nu}, Dh \rangle  \Big) \\
& = 2 \int_{\U_\nu} \langle Dh, Dh - Dh^n_{\nu} \rangle \longrightarrow 0
\end{split}
\]
We can now determine the number $n=n_\nu$ of congruent dyadic squares in $\Q_\nu$, simply requiring that
\[
\begin{cases}
 \diam \Q^i_\nu \le {\epsilon} \qquad \mbox{ for every } i=1,2, \dots, n_\nu\\
\norm{Dh^n_{\nu}-Dh}_{\mathscr L^2 (\overline{\U_\nu})} \le \epsilon \cdot 2^{-\nu}
\end{cases}
\]
Fix such $n=n_\nu$ and abbreviate the notation for $h^{n_\nu}_{\nu}$ to $h^\nu$. We  obtain a homeomorphism
\[h_1:=h+ \sum_{\nu =1}^\infty [h^\nu-h]_\circ \in h+ \Ar_\circ (\Omega)\]
where we recall that $ [h^\nu-h]_\circ$ stands for the zero extension of $h^\nu-h$ to the entire domain $\Omega$. Clearly, $h_1$ is harmonic in each $\U_\nu^i$, $\nu=1,2, \dots$, $i=1,2, \dots , n_\nu$ and we have
\[\norm{h_1-h}_{\mathscr C (\Omega)} \le \sup \{\diam \Q_\nu^i \colon \nu=1,2, \dots, \; \; i=1, \dots, n_\nu \} < {\epsilon}\]
\begin{equation}
\norm{h_1-h}_{\Ar(\Omega)} \le {\epsilon}+\sum_{\nu =1}^\infty  \norm{Dh^\nu-Dh}_{\mathscr L^2(\U_\nu)} \le {\epsilon}+ \sum_{\nu=1}^\infty \epsilon \cdot 2^{-\nu} = 2 \epsilon
\end{equation}
For further considerations it will be convenient to number the squares $\Q_\nu^i$ and their preimages $\U_\nu^i$ using only one index. These sets will be respectively denoted by $\Q^\alpha$ and $\U^\alpha$, $\alpha=1,2, \dots$. For the record,
\begin{equation}\label{doeq}
\diam \Q^\alpha \le {\epsilon}, \qquad \alpha =1,2, \dots
\end{equation}
\textsf{Finite energy case.} Summing up the energy inequalities for the mappings $h^i_\nu \colon \U_\nu^i \to \Q_\nu^i$ we see that the total energy of $h_1$ does not exceed the energy of $h$. Even more, since $h$ was assumed to be not harmonic, there is at least one region $\U_\nu^i$ for which $h \colon \U_\nu^i \to \Q_\nu^i$ was not harmonic. Consequently, its harmonic replacement results in strictly smaller energy. Hence
\begin{equation}\label{36p}\E_{\Omega} [h_1] < \E_{\Omega} [h], \qquad \mbox{so let } \delta= \norm{Dh}_{\mathscr L^2(\Omega)} - \norm{Dh_1}_{\mathscr L^2(\Omega)} >0\end{equation} 

{\bf Step 2.}  Denote by $\mathcal F=\{\Q^\alpha \colon \alpha=1,2, \dots\}$ the family of all open squares $\Q^\alpha \subset \overline{\Q^\alpha} \subset \Omega^\ast$ that are build in Step 1 for the construction of the  mapping $h_1 \colon \Omega \to \Omega^\ast$. Let $\mathcal V$ be the set of vertices of these squares. Whenever two squares $\Q^\alpha, \Q^\beta \in \mathcal F$, $\alpha \ne \beta$, meet along their boundaries the intersection $I^{\alpha , \beta}= \partial \Q^\alpha \cap \partial \Q^\beta$ is either a point in $\mathcal V$ or a closed interval with endpoints in $\mathcal V$. Denote by $\mathcal J \subset \{I^{\alpha , \beta} \colon \alpha\ne \beta,\;  \alpha, \beta=1,2, \dots \}$ the subfamily of all such intersections, excluding empty set and  vertices. For each interval $I^{\alpha , \beta} \in \mathcal J$ we shall construct a doubly convex lens-shaped region $\mathbb L^{\alpha, \beta}$ with $I^{\alpha, \beta}$ as its axis of symmetry in the following way.  Let $R$ be a number greater than the length of $I^{\alpha, \beta}$ to be chosen later. There exist exactly two open disks of radius $R$ for  which $I^{\alpha, \beta}$ is a chord. Let $\mathbb L_R^{\alpha, \beta}$ be their intersection. This is a symmetric doubly convex lens of curvature $\frac{1}{R}$. Thus  $\mathbb L_R^{\alpha, \beta}$ is bounded by two circular arcs $\gamma^{\alpha, \beta}= \Q^\alpha \cap \partial \mathbb L_R^{\alpha, \beta}$ and $\gamma^{\beta, \alpha}=\Q^\beta \cap \partial \mathbb L_R^{\alpha, \beta} $.  As the curvature of the lens approaches zero the area of $\mathbb L_R^{\alpha, \beta}$ tends to $0$. This allows us to choose $R$ depending on $\alpha$ and $\beta$ so that the lenses $\mathbb L^{\alpha, \beta}=\mathbb L_R^{\alpha, \beta}$ have the following property.
\begin{equation}\label{shouldnumber}
\int_{\mathbb K^{\alpha, \beta}} \abs{Dh_1}^2 < \frac{\epsilon^2}{2^{\alpha+\beta}}, \quad \mbox{ where } \mathbb K^{\alpha, \beta}=h^{-1}_1(\mathbb L_R^{\alpha, \beta})
\end{equation}
The lenses $\mathbb L^{\alpha, \beta}$ are disjoint because the opening angle of each  lens is at most $\pi/3$ and their axes are either parallel or orthogonal.  However, the closures of the lenses considered here  may have a common point that lies in $\mathcal V$. On each  $\mathbb K^{\alpha, \beta}$ we replace $h_1$ by the harmonic extension of its restriction to $\partial \mathbb K^{\alpha, \beta}$. Thus we obtain a homeomorphism  $h_2^{\alpha, \beta} \colon \overline{\mathbb K^{\alpha, \beta}} \onto \mathbb \overline{L^{\alpha, \beta}}$ of class $h_1+ \Ar_\circ (\mathbb K^{\alpha, \beta})$. By Proposition~\ref{RKC} the mappings
 $h_2^{\alpha, \beta} \colon \mathbb K^{\alpha, \beta} \onto \mathbb L^{\alpha, \beta}$ are  diffeomorphisms.
 Finally, we define
 \[h_2=h_1+\sum_{\alpha, \beta} [h_2^{\alpha, \beta}-h_1]_\circ \in h_1+ \Ar_\circ (\Omega) = h+ \Ar_\circ (\Omega) \]
 and observe that, from~\eqref{doeq},
 \[\norm{h_2-h_1}_{\mathscr C (\Omega)}\le \sup_{\alpha, \beta} \diam \left( \mathbb L^{\alpha, \beta} \right) \le \epsilon.  \]
Also, ~\eqref{shouldnumber} and Dirichlet's principle imply
 \begin{equation*}
\begin{split}
 \int_{\Omega} \abs{Dh_2-Dh_1}^2 &\le \sum_{\alpha, \beta} \int_{\mathbb K^{\alpha, \beta}}  2\left(\abs{Dh_2}^2+\abs{Dh_1}^2\right) \le  4 \sum_{\alpha, \beta =1}^\infty    \int_{\mathbb K^{\alpha, \beta}}  \abs{Dh_1}^2 \\&\le  4 \sum_{\alpha, \beta =1}^\infty \frac{\epsilon^2}{2^{\alpha+\beta}} =4{\epsilon^2}
 \end{split}
 \end{equation*}
 Thus
 \[\norm{h_2-h_1}_{\Ar(\Omega)} \le \epsilon + 2 \epsilon = 3 \epsilon\]

The boundary of $\mathbb K^{\alpha, \beta}$ consists of two $\mathscr C^\infty$-smooth arcs $\Gamma^{\alpha, \beta}$ and $\Gamma^{\beta, \alpha}$ which share common endpoints, called  the apices of $\mathbb K^{\alpha, \beta}$. These are preimages of $\gamma^{\alpha, \beta}$ and $\gamma^{\beta, \alpha}$ under the mapping $h_1$, respectively.
 Outside of the apices, the homeomorphism $h_2\colon \overline{\mathbb K^{\alpha,\beta}}\onto \overline{\mathbb L^{\alpha,\beta}}$ is $C^{\infty}$ smooth with positive Jacobian. The smoothness is a classical result of Kellogg; {\it  a harmonic function  with $\mathscr C^\infty$-smooth  values on a smooth part of the boundary  is $\mathscr C^\infty$-smooth up to this part of the boundary}~\cite[Theorem 6.19]{GTb}. The positivity of the Jacobian on such part of the boundary follows from the convexity of its image, 
 see~\cite[p. 116]{Dub}.
 
In conclusion, $h_2$ is locally bi-Lipschitz in $\Omega \setminus h^{-1}(\mathcal V)$. The exceptional set $h^{-1}(\mathcal V)$ is discrete because $\mathcal V$ is.\\
\textsf{Finite energy case.} By~\eqref{36p} we have
 \begin{equation}\label{39}
\norm{Dh_2}_{\mathscr L^2(\Omega)} \le \norm{Dh_1}_{\mathscr L^2(\Omega)} \le \norm{Dh}_{\mathscr L^2(\Omega)}  - \delta
 \end{equation}

{\bf Step 3.} First we cover the set of vertices $\mathcal V$ by disks $\{\mathbb D_v \colon v \in \mathcal V\}$   centered at $v$ with radii small enough so that 
\begin{equation}\label{ddiam}
\diam \mathbb D_v\le \epsilon,
\end{equation}
and $\{ 3\mathbb D_v \colon v \in \mathcal V \}$ is a disjoint collection of disks in $\Omega^\ast$. Moreover,  their preimages under $h_2$ must satisfy
\begin{equation}\label{star}
\sum_{v \in \mathcal V} \int_{h_2^{-1}(3 \D_\nu)} \abs{Dh_2}^2 < {\epsilon}^2
\end{equation}
Denote by $\tilde{\Omega}^\ast = \Omega^\ast \setminus \bigcup_{v\in \mathcal V} \overline{\D}_v$ and $\tilde{\Omega}= \Omega \setminus \bigcup_{v \in \mathcal V} h_2^{-1} (\overline{\D}_v)$. Our focus for a while will be on one of the circular sides of  a lens $\mathbb L^{\alpha, \beta}$, say
\[\gamma^{\alpha, \beta}= \Q^\alpha \cap \partial \mathbb L^{\alpha, \beta} \subset \Q^\alpha\]
We truncate it near the endpoints by setting $\tilde{\gamma}^{\alpha, \beta}= \tilde{\Omega} \cap \gamma^{\alpha, \beta}$. Such truncated open  arcs are mutually disjoint; even more, their closures are isolated continua in $\Omega^\ast$. This means that there are disjoint neighborhoods of them. We are actually interested in a neighborhood of $\tilde{\gamma}^{\alpha, \beta}$ of the shape of a thin {\it concavo-convex lens} that we shall denote by $\tilde{\mathbb L}^{\alpha, \beta}$. By definition,  $\tilde{\gamma}^{\alpha, \beta} \subset \tilde{\mathbb L}^{\alpha, \beta} \subset \Q^\alpha$. The construction of such lens goes as follows. Let $a$ and $b$ denote the endpoints of $\tilde{\gamma}^{\alpha, \beta}$, we assemble two circular arcs $\tilde{\gamma}_+^{\alpha, \beta}$ and $\tilde{\gamma}_-^{\alpha, \beta}$ with endpoints at $a$ and $b$ to form together with their endpoints  a concavo-convex Jordan curve. This Jordan curve constitutes  the boundary of a circular lens  $\tilde{\mathbb L}^{\alpha, \beta}$. The term concavo-convex lens refers to the configuration in which  $\tilde{\mathbb L}^{\alpha, \beta}$ lies in the concave side of the arc  $\tilde{\gamma}_-^{\alpha, \beta}$ and convex side of $\tilde{\gamma}_+^{\alpha, \beta}$. It is clear that such lenses can be made arbitrarily thin so that 
 $\tilde{\mathbb L}^{\alpha, \beta} \subset \tilde{\Omega}^\ast$ and the closures of  $\tilde{\mathbb L}^{\alpha, \beta}$ will  still be isolated continua in $\Omega^\ast$. From now on we fix the family $\{\tilde{\mathbb L}^{\alpha, \beta} \colon \alpha \ne \beta \}$ of such concavo-convex lenses associated with the arcs $\tilde{\gamma}^{\alpha, \beta}$. We then look at  their preimages $\U^{\alpha, \beta} = h_2^{-1} ( \tilde{\mathbb L}^{\alpha, \beta})$ and the $\mathscr C^\infty$-smooth arcs $\Upsilon^{\alpha, \beta} =  h_2^{-1} ( \tilde{\gamma}^{\alpha, \beta})$. The endpoints of $\Upsilon^{\alpha, \beta}$ lie in $\partial \U^{\alpha, \beta}$. Moreover, $\Upsilon^{\alpha, \beta}$ splits $\U^{\alpha, \beta}$ into two disjoint subdomains $\U_+^{\alpha, \beta}$ and $\U_-^{\alpha, \beta}$ such that $\U^{\alpha, \beta} \setminus \Upsilon^{\alpha, \beta} = \U_+^{\alpha, \beta} \cup \U_-^{\alpha, \beta}$. Here we have a homeomorphism $h_2\colon \U^{\alpha, \beta} \onto  \tilde{\mathbb L}^{\alpha, \beta} $ which is $\mathscr C^\infty$-diffeomorphism on $\overline{\U}_+^{\alpha, \beta}$ and  $\mathscr C^\infty$-diffeomorphism on $\overline{\U}_-^{\alpha, \beta}$. Therefore, for some positive number $M_{\alpha, \beta}$, we have pointwise inequlities $\abs{Dh_2} \le {M_{\alpha, \beta}}$ and $\det Dh_2 \ge \frac{1}{M_{\alpha, \beta}}$ in both $\overline{\U}_+^{\alpha, \beta}$ and $\overline{\U}_-^{\alpha, \beta}$. Having established such a deformation of lenses and their preimages under $h_2$, we apply Corollary~\ref{muncor}. We infer that there is also a constant $M'_{\alpha, \beta}>0$ with the following property: to every neighborhood of $\Upsilon^{\alpha, \beta}$, say an open connected set  $\U_\circ^{\alpha, \beta}\subset \U^{\alpha, \beta}$ that contains $\Upsilon^{\alpha, \beta}$, there corresponds a $\mathscr C^\infty$-diffeomorphism, denoted by $h_3 \colon \U^{\alpha, \beta} \onto \tilde{\mathbb L}^{\alpha, \beta}$, such that
 \begin{equation}
 \begin{cases}
 h_3(z)=h_2(z) \quad \mbox{ for } z\in \U^{\alpha, \beta}\setminus {\U}_\circ^{\alpha, \beta}\\
 \abs{Dh_3} \le {M'_{\alpha, \beta}}\quad \mbox{ and } \quad \det Dh_3 \ge \frac{1}{M'_{\alpha, \beta}} \quad \mbox{in } \U^{\alpha, \beta}
 \end{cases}
 \end{equation}
We emphasize that $M'_{\alpha, \beta}$ is independent of the neighborhood ${\U}_\circ^{\alpha, \beta}$. We choose and fix ${\U}_\circ^{\alpha, \beta}$  thin enough to satisfy
\begin{itemize}
\item  $\overline{\U}_\circ^{\alpha, \beta} \subset {\U}^{\alpha, \beta} \cup \overline{\Upsilon}^{\alpha, \beta}$
\item $\abs{{\U}_\circ^{\alpha, \beta}} \le [M_{\alpha, \beta}+M'_{\alpha, \beta}]^{-2} \epsilon^2\,  2^{-\alpha - \beta}$
\item $\sup_{\U^{\alpha, \beta}_\circ} \abs{{Dh_2}} \le M_{\alpha, \beta}$
\item \textsf{in the finite energy case}, we also assume that
$\abs{{\U}_\circ^{\alpha, \beta}} \le [M'_{\alpha, \beta}]^{-2}\delta^2\,  4^{-\alpha - \beta-1}$
\end{itemize}
Recall that $\delta$ was defined by~\eqref{36p} and later appeared in~\eqref{39}.
This is certainly possible; for instance, take ${\U}_\circ^{\alpha, \beta}$ to be the preimage under $h_2$ of a sufficiently thin concavo-convex lens containing $\tilde{\gamma}^{\alpha, \beta}$. We call  $h_3 \colon \U^{\alpha, \beta} \onto \tilde{\mathbb L}^{\alpha, \beta}$  a smoothing of  $h_2 \colon \U^{\alpha, \beta} \onto \tilde{\mathbb L}^{\alpha, \beta}$  
associated with a given arc $\Upsilon^{\alpha, \beta}= h_2^{-1}(\tilde{\gamma}^{\alpha, \beta})$.
We now define a homeomorphism $h_3\colon \Omega\onto\Omega^*$ by the rule
\[
h_3=\begin{cases} \text{smoothing of $h_2$} \quad &\text{in $\U^{\alpha,\beta}$} \\
h_2 & \text{in $\Omega\setminus \bigcup_{\alpha,\beta}\U^{\alpha,\beta}$}
\end{cases}
\]
It belongs to $h_2+\Ar_\circ (\Omega)$. Obviously $h_3$ is a $\mathscr C^{\infty}$-diffeomorphism in $\widetilde{\Omega}$. We have for every $z\in\Omega$
\[
\begin{split}
\abs{h_3(z)-h_2(z)}&\le \begin{cases} \diam \widetilde{\mathbb L}^{\alpha,\beta} \quad &\text{for } z\in\U^{\alpha,\beta} \\
0 & \text{otherwise}
\end{cases} 
\\ &\le \diam \Q^\alpha \le {\epsilon}
\end{split}
\]
see~\eqref{doeq}. Hence $\norm{h_3-h_2}_{\mathscr C({\Omega})}\le{\epsilon}$. As regards  the energy of $h_3-h_2$ we find that
\begin{equation}\label{step3chain}
\begin{split}
\E_\Omega[h_3-h_2] &= \sum_{\alpha,\beta} \int_{{\U}_\circ^{\alpha,\beta}}\abs{Dh_3-Dh_2}^2
\\
&\le \sum_{\alpha,\beta} \abs{{\U}_\circ^{\alpha,\beta}}\sup_{{\U}_\circ^{\alpha,\beta}}
\left(\abs{Dh_3}+\abs{Dh_2}\right)^2
\\
& \le \sum_{\alpha,\beta} \abs{{\U}_\circ^{\alpha,\beta}} [M_{\alpha,\beta}'+M_{\alpha, \beta}]^2\le \sum_{\alpha, \beta =1}^\infty \frac{\epsilon^2}{2^{\alpha+ \beta}} \le {\epsilon^2} 
\end{split}
\end{equation}
These estimates sum up to
\[\norm{h_3-h_2}_{\Ar(\Omega)} \le {\epsilon}+ {\epsilon}= 2 \epsilon\]
Let us record for subsequent use the following estimate, obtained from~\eqref{star} and~\eqref{step3chain}.
\begin{align}\label{mes}
\sum_{v \in \mathcal V} \int_{h_3^{-1}(3 \D_\nu)} \abs{Dh_3}^2  &\le \sum_{v \in \mathcal V}\left( \int_{h_3^{-1}(3 \D_\nu) \setminus h_2^{-1}(3 \D_\nu)  } \abs{Dh_3}^2 +\int_{h_2^{-1}(3 \D_\nu)} \abs{Dh_3}^2  \right) \nonumber \\
\le \int_{\{h_3\ne h_2\}} \abs{Dh_3}^2 &+ 2\sum_{v \in \mathcal V} \left(\int_{h_2^{-1}(3\D_v)} \abs{Dh_3-Dh_2}^2 +   \int_{h_2^{-1}(3 \D_\nu)} \abs{Dh_2}^2    \right) \nonumber\\
&\le   \sum_{\alpha, \beta} \left(M'_{\alpha, \beta} \right)^2 \abs{\U^{\alpha, \beta}_\circ} + 2\epsilon^2 + 2 \epsilon^2 \le 5 \epsilon^2
\end{align}
\textsf{Finite energy case.} 
For the energy of  $h_3$, we observe that
\[
\begin{split}
\norm{Dh_3}_{\mathscr L^2(\Omega)} & \le \norm{Dh_3}_{\mathscr L^2(\Omega\setminus \cup \U_\circ^{\alpha, \beta})} + \sum_{\alpha, \beta} \norm{Dh_3}_{\mathscr L^2(\U_\circ^{\alpha, \beta})} \\
&=   \norm{Dh_2}_{\mathscr L^2(\Omega\setminus \cup \U_\circ^{\alpha, \beta})} +  \sum_{\alpha, \beta} \norm{Dh_3}_{\mathscr L^2(\U_\circ^{\alpha, \beta})} \\
&\le \norm{Dh_2}_{\mathscr L^2(\Omega)} + \sum_{\alpha, \beta} \abs{\U_\circ^{\alpha, \beta}}^{1/2}  \sup_{\U^{\alpha, \beta}_\circ} \abs{Dh_3}\\
&\le   \norm{Dh}_{\mathscr L^2(\Omega)} - \delta + \frac{\delta}{2}  
\end{split}
\]
Thus
\begin{equation}\label{lastadd}
\norm{Dh_3}_{\mathscr L^2(\Omega)} \le  \norm{Dh}_{\mathscr L^2(\Omega)} -  \frac{\delta}{2}  
\end{equation}

{\bf Step 4.} We have already upgraded the mapping $h$ to a homeomorphism $h_3 \colon \Omega \to \Omega^\ast$ such that $h_3\in h+\Ar_\circ(\Omega)$ and
\begin{equation}\label{sstep4}
\begin{split}
\norm{h_3-h}_{\Ar(\Omega)} & \le \norm{h_3-h_2}_{\Ar(\Omega)} + \norm{h_2-h_1}_{\Ar(\Omega)} + \norm{h_1-h}_{\Ar(\Omega)}\\ & \le 2 \epsilon + 3 \epsilon + 2\epsilon = 7 \epsilon
\end{split}
\end{equation}
Moreover, $h_3$ is a $\mathscr C^\infty$-diffeomorphism on $\Omega \setminus \bigcup_{v\in \mathcal V} h_2^{-1} (\overline{\D}_v)$. We now define a homeomorphism $h_4 \colon \Omega \onto \Omega^\ast$ by performing harmonic replacement of $h_3$ on each set $h_3^{-1}(2 \D_v)$. This gives us a  $\mathscr C^\infty$-diffeomorphism $h_4 \colon h_3^{-1} (2 \overline{\D}_v)\to 2 \overline{\D}_v$, see Step 2 for details. For each $z\in \Omega$
\[\abs{h_4(z)-h_3(z)} \le \begin{cases} 2 \diam \D_v \quad & \mbox{ if } z\in h_3^{-1} (2\D_v) \\
0 & \mbox{ otherwise}\end{cases} \quad \le 2{\epsilon}\]
Hence $\norm{h_4-h_3}_{\mathscr C (\Omega)} \le 2{\epsilon}$. Using~\eqref{mes} we estimate the energy as follows.
\[
\begin{split}
\E_\Omega [h_4-h_3] &= \sum_{v\in \mathcal V} \E_{h_3^{-1}(2\D_v)}[h_4-h_3]\\
& \le 2 \sum_{v\in \mathcal V} \left( \E_{h_3^{-1}(2\D_v)}[h_4] + \E_{h_3^{-1}(2\D_v)}[h_3]  \right)\\
& \le 4  \sum_{v\in \mathcal V} \E_{h_3^{-1}(2\D_v)}[h_3] \le 20\epsilon^2
\end{split}\]
Thus, by~\eqref{mes}
\begin{equation}\label{sstep5} \norm{h_4-h_3}_{\Ar(\Omega)} \le \epsilon + \sqrt{20} \epsilon \le 6 \epsilon\end{equation}
\textsf{Finite energy case.} By virtue of Dirichlet's principle and~\eqref{lastadd} we have
\begin{equation}\label{wearehungry}
\norm{Dh_4}_{\mathscr L^2(\Omega)} \le \norm{Dh_3}_{\mathscr L^2(\Omega)} \le \norm{Dh}_{\mathscr L^2(\Omega)}  - 
\frac{\delta}{2}
\end{equation}

{\bf Step 5.} The final step consists of smoothing $h_4$ in a neighborhood of each smooth Jordan curves $C_v= \partial h_3^{-1}(2\D_v)$. We proceed in much the same way as in Step 3, but we appeal to Corollary~\ref{muncircor} instead of Corollary~\ref{muncor}. By smoothing $h_4$ in a sufficiently thin neighborhood of  each $C_v$  we obtain a $\mathscr C^{\infty}$-diffeomorphism $h_5 \colon \Omega \onto \Omega^\ast$, $h_5\in h_4 + \Ar_\circ (\Omega)$ such that
\begin{equation}\label{needsharpen}
\norm{h_5-h_4}_{\Ar(\Omega)} \le {\epsilon}
\end{equation}
We now recapitulate the estimates~\eqref{sstep4},~\eqref{sstep5} and~\eqref{needsharpen} to obtain a $\mathscr C^\infty$-diffeomorphism in $\Omega$
\[H:=h_5\in h+\Ar(\Omega)\]
such that 
\[
\begin{split}
\norm{H-h}_{\Ar(\Omega)} & \le \norm{h_5-h_4}_{\Ar(\Omega)} +  \norm{h_4-h_3}_{\Ar(\Omega)} +  \norm{h_3-h}_{\Ar(\Omega)}\\
& \le \epsilon + 6 \epsilon + 7 \epsilon  =14 \epsilon     
\end{split}
\]
which is as strong as (ii) in Theorem~\ref{thmapprox}.\\
\textsf{Finite energy case.} To obtain the desired energy estimate $\E_\Omega [h_5] \le \E_\Omega [h]$, 
we need to sharpen the energy part in~\eqref{needsharpen}. By narrowing further the neighborhoods of $C_v$ we can be make the energy $\E_\Omega[h_5-h_4]$ as small as we wish; for example to obtain
\[\norm{Dh_5-Dh_4}_{\mathscr L^2 (\Omega)}< \frac{\delta}{2}\]
This is enough to conclude that
\[\norm{DH}_{\mathscr L^2(\Omega)} \le \norm{Dh}_{\mathscr L^2(\Omega)} \]
because of~\eqref{wearehungry}.
 \end{proof}

\section{Hopf differentials, Theorem~\ref{thmhopf}}
A quadratic differential on a domain $\Omega$ in the complex plane $\C$ takes the form $Q=F(z)\, \dtext z \otimes \dtext z$, where $F$ is a complex function on $\Omega$. Given a conformal change of the variable $z$, $z= \varphi(\xi)$, where $\varphi \colon \Omega' \to \Omega$, the pull back 
\[\varphi^\sharp (Q)= F\big(\varphi(\xi)\big)\, \dtext \varphi \otimes \dtext \varphi = F \big( \varphi (\xi) \big) \dot{\varphi}^2 (\xi) \, \dtext \xi \otimes \dtext \xi\]  
defines a quadratic differential on $\Omega'$. It is plain that for a complex harmonic function $h \colon \Omega \to \C$ the associated Hopf differential
\[Q_h= h_z \overline{h_{\bar z}}\,  \dtext z \otimes \dtext z\]
is holomorphic, meaning that
\begin{equation}\label{heq0}
\frac{\partial}{\partial \bar z} \left(h_z \overline{h_{\bar z}} \right)=0
\end{equation}
Conversely, if a Hopf differential $Q_h=h_z \overline{h_{\bar z}}\,  \dtext z \otimes \dtext z$ is holomorphic for some $\mathscr C^1$-mapping $h$, then $h$ is harmonic at the points where the Jacobian determinant $J(z,h):=\det Dh= \abs{h_z}^2- \abs{h_{\bar z}}^2 \ne 0$, see~\cite[10.5]{EL1} and our Remark~\ref{notredable}. Here the assumption that $J(z,h) \ne 0$ is critical. Let us illustrate it by the following.

\begin{example}\label{ex}
Consider a mapping $h\in \mathscr C^{1,1} (\C_\circ)$ defined on the punctured plane $\C_\circ = \C \setminus \{0\}$ by the rule
\begin{equation}\label{heq1}
h(z)= \begin{cases} \frac{z}{|z|}& \mbox{ for } 0< \abs{z} \le 1 \\
\frac{1}{2} \left(z+ \frac{1}{\bar z}\right)\qquad  &  \mbox{ for } 1 \le\abs{z} < \infty \end{cases}
\end{equation}
Direct computation shows that
\[
h_z (z)= \begin{cases} \frac{1}{2}|z|^{-1}& \mbox{ for } 0< \abs{z} \le 1 \\
\frac{1}{2} \qquad  &  \mbox{ for } 1 \le\abs{z} < \infty \end{cases}
\]
and 
\[
h_{\bar z} (z)= \begin{cases} -\frac{1}{2}|z| {\bar z}^{-2}& \mbox{ for } 0< \abs{z} \le 1 \\
-\frac{1}{2}\bar z^{-2} \qquad  &  \mbox{ for } 1 \le\abs{z} < \infty \end{cases}
\]
Thus 
\begin{equation}\label{tristars}
Q_h = - \frac{\dtext z \otimes \dtext z}{4z^2} \qquad \mbox{ in } \C_\circ
\end{equation}
It may be worth mentioning that the mapping $h$ in~\eqref{heq1} is the unique (up to rotation of $z$) minimizer of the Dirichlet energy
\[\mathcal E [H]= \int_{\A} \abs{DH}^2\]
over the annulus $\A= A(r,R)=\{z \colon r< \abs{z} < R  \}$, $0<r<1<R$, subject to all weak limits of homeomorphisms $H \colon \A \onto A(1, R_\ast)$, where $R_\ast = \frac{1}{2} \left(R + \frac{1}{R}\right)$, see~\cite{AIM}. Note that the Hopf differential of~\eqref{tristars} is real along the boundary circles of $\A$. The concentric circles are horizontal trajectories of $Q_h$. In fact this is a general property of minimizers~\cite[Lemma 1.2.5]{Job}. The general pattern is that with the loss of injectivity comes the loss of the Lagrange-Euler equation for the extremal mapping.
\end{example}

Properties of the function $h$ with holomorphic Hopf differential $Q= h_z \overline{h_{\bar z}} \, \dtext z \otimes \dtext z$ are of   interest in the studies of harmonic mappings~\cite{EL2, EL3, Job, Sc, Se}, minimal surfaces~\cite{DHKWb, Stb} and Teichm\"uller theory~\cite{GLb}. In this section we prove Theorem~\ref{thmhopf} which imposes fairly minimal assumptions that imply harmonicity of $\mathscr W^{1,2}$-solution to the equation~\eqref{heq0}. Some elements of the proof go back to~\cite{RS, RW}.


\begin{proof}[Proof of Theorem~\ref{thmhopf}]
As a consequence of the Stoilow factorization theorem~\cite[p. 56]{AIM} the branch set of $h$ is discrete, hence removable for continuous harmonic functions. Thus we assume that  $h \colon \Omega \onto \Omega^\ast$ is a homeomorphism of Sobolev class $\W^{1,2}_{\loc} (\Omega, \Omega^\ast)$ such that
\begin{equation}
h_z \overline{h_{\bar z}} =F(z) \quad \mbox{ is holomorphic in } \; \Omega
\end{equation}
By virtue of Theorem~\ref{thmapprox}, there exists a sequence of diffeomorphisms $h^j \colon \Omega \onto \Omega^\ast$ converging $c$-uniformly and strongly in  $\W^{1,2}_{\loc} (\Omega, \Omega^\ast)$  to $h$. Denote by
\begin{equation}
h_z^j h^j_{\bar z}=:F^j \in \mathscr L_{\loc}^1(\Omega) 
\end{equation}
Thus $F^j \to F$ strongly in $ \mathscr L_{\loc}^1(\Omega)$. Let us first dispose of an easy case.\\
{\bf Case 0.} {\it The homogeneous equation $F \equiv 0$}. Since $h^j$ are diffeomorphisms the Jacobian determinant $J(z,h^j)= \abs{h_z^j}^2-\abs{h_{\bar z}^j}^2$ is either positive everywhere in $\Omega$ or negative everywhere in $\Omega$. Let us settle the case when $J(z,h^j)>0$ for infinitely many indices $j=1,2,\dots$.
For such $j$ we have $\abs{h^j_z}>\abs{h^j_{\bar z}}$, which yields 
$\abs{h^j_{\bar z}}^2\le \abs{h^j_z h^j_{\bar z}}$. Passing to the $\mathscr L^1$-limit we obtain 
\[\abs{h_{\bar z}}^2\le \abs{h_z h_{\bar z}} = \abs{F(z)}\equiv 0.\]
Thus $h$ is holomorphic, by  Weyl's lemma. Similarly, in case $J(z,h^j)<0$ for infinitely many indices $j=1,2,\dots$,
we find that $h$ is antiholomorphic. 

\begin{remark}\label{rem} We observe, based on the above arguments, that for this homogeneous equation $h_z\overline{h_{\bar z}}\equiv 0$
every solution $h\in \W_{\loc}^{1,2}(\Omega)$ obtained as the weak $\W^{1,2}$-limit of homeomorphisms is either holomorphic or 
antiholomorphic. The situation is dramatically different if $h_z\overline{h_{\bar z}}\not\equiv 0$; some topological assumption on $h$ are necessary, as illustrated in Example~\ref{ex}. 
\end{remark}

{\bf Case 1.} {\it Nonhomogeneous equation $F\not\equiv 0$}. The function $F$, being holomorphic, may vanish only at isolated points. 
Since isolated points are removable for bounded harmonic functions, it suffices to consider the set where $F\ne 0$. 
Proceeding further in this direction, we may and do assume that $F(z)\equiv 1$ (by a conformal change of the $z$-variable) and $h$ is a 
$\W^{1,2}$-homeomorphism in the closure of the unit square $\Q=\{x+iy\colon 0<x<1, 0<y<1\}$.
The problem now reduces to establishing that the equation 
\begin{equation}\label{eeq3}
h_{z}\overline{h_{\bar z}}\equiv 1
\end{equation}
 implies $\Delta h=0$.  This will be proved indirectly by means of the energy-minimizing property  
\begin{equation}\label{ineq4}
\mathcal E_{\Q}[h]\le \mathcal E_{\Q}[H]
\end{equation}
where $H\colon \Q\to h(\Q)$ is any homeomorphism in $h+\Ar_\circ(\Q)$; in particular, $H=h$ on $\partial\Q$. 
Indeed, if $h$ were not harmonic, we would be able to decrease its energy by harmonic replacement (Propositions~\ref{prpoisson} and~\ref{RKC}), 
contradicting~\eqref{ineq4}.

\subsection{Proof of the inequality~\eqref{ineq4}}

With the aid of the approximation theorem we need only prove~\eqref{ineq4} for mappings $H\in h+ \Ar_\circ(\Q)$ that
are diffeomorphisms on $\Q$. From now on we assume that this is the case. Denote $\Q^*=h(\Q)=H(\Q)$. We consider a sequence $h^j\in h+\Ar_\circ(\Q)$
of diffeomorphisms $h^j\colon \Q\onto\Q^*$ converging in $\Ar(\Q)$ to $h$. Moreover we may also assume that $Dh^j\to Dh$ almost everywhere in $\Q$ by passing to a subsequence if necessary. Now the sequence $\chi^j\colon\overline{\Q}\to\overline{\Q}$ of self-homeomorphisms of the closed unit disk
given by $\chi^j=H^{-1}\circ h^j$, where $\chi^j=\id$ on $\partial\Q$, is converging uniformly on $\overline{\Q}$ to $\chi=H^{-1}\circ h$.
It is important to observe that $\chi\in \W_{\loc}^{1,2}(\Q)$ and $\chi^j$ converges to $\chi$ in $\W^{1,2}(\Q')$ on any compactly
contained subdomain $\Q'\Subset \Q$. Since $h^j$ and $(\chi^j)^{-1}$ are diffeomorphisms on $\Q'$ and $\chi^j(\Q')$, respectively,
the chain rule can be applied to the composition $H=h^j\circ (\chi^j)^{-1}$. For $w\in \chi^j(\Q')$ we have
\begin{equation*}
\begin{split}
\frac{\partial H(w)}{\partial w} &= h_z^j(z)\frac{\partial (\chi^j)^{-1}}{\partial w}+h_{\bar z}^j(z)\frac{\partial (\chi^j)^{-1}}{\partial \bar w} \\
\frac{\partial H(w)}{\partial \bar w} &= h_z^j(z)\frac{\partial (\chi^j)^{-1}}{\partial \bar w}+h_{\bar z}^j(z)\overline{\frac{\partial (\chi^j)^{-1}}{\partial w}}
\end{split}
\end{equation*}
where $z=(\chi^j)^{-1}(w)$.

The partial derivatives of $(\chi^j)^{-1}$ at $w$ can be expressed in terms of $\chi_{_z}^j(z)$ and $\chi_{\bar z}^j(z)$ by the rules
\begin{equation*}
\begin{split}
\frac{\partial (\chi^j)^{-1}}{\partial w} &= \frac{\chi_z^j(z)}{J(z,\chi^j)} \\
\frac{\partial (\chi^j)^{-1}}{\partial \bar w} &= - \frac{\chi_{\bar z}^j(z)}{J(z,\chi^j)}
\end{split}
\end{equation*}
where the Jacobian determinant $J(z,\chi^j)$ is strictly positive. This yields
\begin{equation*}
\begin{split}
\frac{\partial H}{\partial w} & =\frac{h_z^j\overline{\chi_z^j}-h_{\bar z}^j\overline{\chi_{\bar z}^j}}{J(z,\chi^j)} \\
\frac{\partial H}{\partial \bar w} & =\frac{h_{\bar z}^j{\chi_z^j}-h_{\bar z}^j{\chi_{\bar z}^j}}{J(z,\chi^j)}
\end{split}
\end{equation*}
We compute the energy integral of $H$ over the set $\chi^j(\Q')$ by substitution $w=\chi^j(z)$,
\begin{equation*}
\begin{split}
\mathcal E_{\chi^j(\Q')}[H]&=2\int_{\chi^j(\Q')} \left(\abs{H_w}^2+\abs{H_{\bar w}}^2\right)\,\dtext w \\
&=2\int_{\Q'}\frac{\abs{h_z^j \overline{\chi_z^j} - h_{\bar z}^j \overline{\chi_{\bar z}^j}}^2+
\abs{h_{\bar z}^j \chi_z^j - h_{z}^j \chi_{z}^j}^2}{\abs{\chi_z^j}^2-\abs{\chi_{\bar z}^j}^2}\,\dtext z
\end{split}
\end{equation*}
On the other hand, the energy of $h^j$ over the set $\Q'$ is
\[
\mathcal E_{\Q'}[h^j]=2\int_{\Q'}\left(\abs{h_z^j}^2+\abs{h_{\bar z}^j}^2\right)\,\dtext z
\]
Subtract these two integrals to obtain
\begin{equation}\label{cchain}
\begin{split}
\mathcal E_{\Q}[H]-\mathcal E_{\Q'}[h^j] &\ge \mathcal E_{\chi^j(\Q')}[H]-\mathcal E_{\Q'}[h^j] \\
&=  4\int_{\Q'}  \frac{\left(\abs{h_z^j}^2+\abs{h_{\bar z}^j}^2\right)\cdot \abs{\chi_{\bar z}^j}^2
-2\re \left[h_z^j\overline{h_{\bar z}^j} \overline{\chi_z^j}\chi_{\bar z}^j\right]}{\abs{\chi_z^j}^2-\abs{\chi_{\bar z}^j}^2}\,\dtext z \\
& \ge 4 \int_{\Q'} \frac{ 2\abs{h_z^jh_{\bar z}^j} \abs{\chi_{\bar z}^j}^2
-2\re \left[h_z^j\overline{h_{\bar z}^j} \overline{\chi_z^j}\chi_{\bar z}^j\right]}{\abs{\chi_z^j}^2-\abs{\chi_{\bar z}^j}^2}\,\dtext z \\
& = 4 \int_{\Q'} \left[\frac{\abs{\chi_z^j-\sigma^j(z) \chi_{\bar z}^j}^2}{\abs{\chi_z^j}^2-\abs{\chi_{\bar z}^j}^2} -1 \right]
\, \abs{h_z^jh_{\bar z}^j} \,\dtext z
\end{split}
\end{equation}
where we have introduced the notation
\[
\sigma^j = \sigma^j(z) = \begin{cases}
{h_z^j\overline{h_{\bar z}^j}}{\, \abs{h_z^jh_{\bar z}^j}^{-1}} \qquad &\text{if } h_z^jh_{\bar z}^j\ne 0 \\
1 & \text{otherwise.}
 \end{cases}
\]
Note that $\abs{\sigma^j}=1$ and $\sigma^j\to 1$ almost everywhere.

Upon using H\"older's inequality we continue the chain~\eqref{cchain} as follows.
\begin{equation}\label{cchain2}
\ge \; 4\frac{\left[\int_{\Q'}\left| \chi_z^j-\sigma^j\chi_{\bar z}^j\right| \,\sqrt{\abs{h_z^jh_{\bar z}^j}}\,\dtext z\right]^2}{\int_{\Q'} J(z,h^j)\,\dtext z}
-4\int_{\Q'} \abs{h_z^jh_{\bar z}^j}.
\end{equation}
The denominator in~\eqref{cchain2} is at most $1$ because
\[\int_{\Q'} J(z,h^j)\,\dtext z = \abs{\chi^j(\Q')}\le\abs{\Q}=1.\]
Therefore,
\begin{equation*}
\mathcal E_{\Q}[H]-\mathcal E_{\Q'}[h^j]\ge 4\left[\int_{\Q'}\left| \chi_z^j-\sigma^j\chi_{\bar z}^j\right|\,\sqrt{\abs{h_z^jh_{\bar z}^j}}\,\dtext z  \right]^2 -4\int_{\Q'}\abs{h_z^jh_{\bar z}^j}\, \dtext z.
\end{equation*}
It is at this point that we can pass to the limit as $j\to\infty$, to obtain
\begin{equation}\label{cchain3}
\mathcal E_{\Q}[H]-\mathcal E_{\Q'}[h]\ge 4\left[\int_{\Q'}\abs{\chi_z-\chi_{\bar z}}\, \dtext z \right]^2-4\abs{\Q'}.
\end{equation}
Since $\Q'$ was an arbitrary compactly contained subdomain of $\Q$, the estimate~\eqref{cchain3} remains valid with $\Q'$ replaced by $\Q$.
\begin{equation}\label{cchain4}
\begin{split}
\mathcal E_{\Q}[H]-\mathcal E_{\Q}[h] &\ge 4\left[\int_{\Q}\left|\frac{\partial\chi}{\partial y}\right|\,\dtext x\,\dtext y\right]^2-4  \\
&\ge 4\int_0^1 \left|\int_0^1 \frac{\partial\chi (x,y)}{\partial y}\,\dtext y\right|\,\dtext x - 4 \\
&= 4 \int_0^1 \abs{\chi(x,1)-\chi(x,0)}\,\dtext x-4 =4-4 =0
\end{split}
\end{equation}
as desired.
\end{proof}

\begin{remark}\label{notredable}
When specialized to the case $h \in \mathscr C^1$, Theorem~\ref{thmhopf} shows that $h$ is harmonic outside of the zero set of its Jacobian.
\end{remark}

\section{Auxiliary smoothing results}

Here we present some results concerning smoothing of piecewise differentiable planar homeomorphisms.
They can be found in~\cite{Mu} in greater generality, but since we require quantitative control of derivatives,
a self-contained proof is in order. Here it is more convenient to use the operator norm of a matrix, denoted by $\norm{\cdot}$. Note that $\norm{A} \le \abs{A} \le 2 \norm{A}$ for $2 \times 2$-matrices.

\begin{proposition}\label{mun} Let $\U\subset\R^2$ be a  domain containing an open segment $I$ with endpoints on the boundary $\partial \U$ which splits $\U$ into two subdomains $\U_1$ and $\U_2$ such that $\U \setminus I= \U_1\cup \U_2$.
Suppose that $f\colon \overline{\U}\onto \overline{\U^*}\subset\R^2$ is a homeomorphism with the following properties:
\begin{enumerate}[(i)]
\item\label{mun2a} For $j=1,2, \dots$ the restriction of $f$ to $\overline{\U_j} $ is $\mathscr C^{\infty}$-smooth, equals the identity on $I$;
\item\label{mun2} There is a constant $M>0$ such that for $j=1,2$ the restriction of $f$ to $\overline{\U_j}$ satisfies $\norm{Df}\le M$ and $\det Df\ge M^{-1}$.
\end{enumerate}

Then for any open set
$\U_\circ$ with $I\subset \U_\circ\subset \U$ there is a $\mathscr C^{\infty}$-diffeomorphism $g\colon \U\to \U^*$ such that
\begin{itemize}
\item $g$ agrees with $f$ on $\U\setminus \U_\circ$ (and also on $I$);
\item $\norm{Dg}\le 20M$ and $\det Dg\ge (20M)^{-1}$ on $\U$.
\end{itemize}
\end{proposition}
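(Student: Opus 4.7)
The plan is to modify $f$ only in a very thin strip around $I$ contained in $\U_\circ$, replacing $f$ there by a smooth interpolation that keeps $f|_I=\mathrm{id}$ and matches $f$ smoothly at the boundary of the strip. After a rigid motion of $\R^2$ I may assume $I\subset\{y=0\}$ with $\U_1,\U_2$ locally above and below. By Whitney extension I obtain smooth maps $F_1,F_2$ on a common two-sided neighborhood of $I$ with $F_j=f$ on $\overline{\U_j}$ and (after shrinking the neighborhood) $\norm{DF_j}\le M$ and $\det DF_j\ge M^{-1}$ on both sides, both equal to the identity on $I$.

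Rather than a naive convex combination $\eta F_2+(1-\eta)F_1$, whose derivative contains a cross-term $\delta^{-1}\eta'(y/\delta)(F_2-F_1)\otimes e_y$ of size $\sim M\norm{\eta'}_\infty$ that can drive the Jacobian negative, I would smooth fibrewise at the level of the \emph{normal derivative}. For each $x$ I construct a smooth vector field $\mu(x,\cdot)$ on $[-\delta,\delta]$ such that (a) $\mu(x,y)=\partial_y F_1(x,y)$ to infinite order at $y=\delta$ and $\mu(x,y)=\partial_y F_2(x,y)$ to infinite order at $y=-\delta$; (b) the integral identities
\[
\int_0^\delta \mu(x,s)\,ds=\int_0^\delta \partial_y F_1(x,s)\,ds,\qquad \int_{-\delta}^0 \mu(x,s)\,ds=\int_{-\delta}^0 \partial_y F_2(x,s)\,ds
\]
hold, so that integrating from $(x,0)$ recovers $f(x,\pm\delta)$ at the top and bottom of the strip; and (c) the vertical component $\mu^{(2)}(x,y)$ stays $\ge (2M)^{-1}$ while $\abs{\mu(x,y)}\le 3M$. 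Such a $\mu$ is produced by taking a fixed smooth interpolation profile between $\partial_y F_1$ and $\partial_y F_2$ in the $y$-direction, and then adding a mean-zero $L^\infty$-correction on each half $[0,\delta]$ and $[-\delta,0]$ to enforce (b); the correction has $L^\infty$-norm $\lesssim M$. Setting $g(x,y)=f(x,y)$ outside the strip $\mathcal{S}_\delta=\{|y|<\delta\}\cap\U\subset\U_\circ$ and $g(x,y)=(x,0)+\int_0^y \mu(x,s)\,ds$ inside, one checks that $g$ is $\mathscr{C}^\infty$ across $\partial\mathcal{S}_\delta$ by (a) and (b), agrees with $f$ on $\U\setminus\U_\circ$, and equals the identity on $I$.

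The derivative bounds are now immediate: $\partial_y g=\mu$ is controlled by (c), while $\partial_x g=e_1+\int_0^y \partial_x\mu(x,s)\,ds$ is close to $e_1$ once $\delta$ is taken small (since $\partial_x \mu$ is bounded by the second derivatives of $F_1,F_2$, which are finite on any compact subset of the tubular neighborhood). Hence $\norm{Dg}\le 20M$ for $\delta$ small. The $(2,2)$-entry of $Dg$ equals $\mu^{(2)}\ge (2M)^{-1}$, dominating an $O(\delta)$ perturbation from the off-diagonal terms, so $\det Dg\ge (20M)^{-1}$. Injectivity of $g$ on $\mathcal{S}_\delta$ (which is topologically a disk) follows from positivity of the Jacobian combined with the boundary matching $g=f$ on $\partial\mathcal{S}_\delta$ via a standard degree-theory argument.

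The main obstacle is the Jacobian lower bound: any scheme that interpolates the map $f$ itself, rather than its normal derivative, creates a cross-term proportional to $\norm{\eta'}_\infty\cdot \norm{F_2-F_1}$ that is forced to be of order $M$ (since $\norm{\eta'}_\infty\ge 1/2$ for any smooth transition between $0$ and $1$), and this can overwhelm the $1/M$ lower bound coming from $\det DF_j$. The fibrewise construction circumvents this by reducing the Jacobian condition to positivity of the single scalar $\mu^{(2)}(x,y)$, a property preserved by \emph{any} smooth interpolation between the two positive fields $\partial_y F_1^{(2)}$ and $\partial_y F_2^{(2)}$ along with a suitably small mean-zero correction.
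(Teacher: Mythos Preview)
Your strategy---interpolating the normal derivative and then integrating---is genuinely different from the paper's, and it is an interesting idea. The paper instead interpolates the two components of $f$ directly with affine maps: writing $f=(u,v)$, it first blends $u$ with $x$ on a neighbourhood $V(\beta)=\{\abs{y}<\beta(x)\}$ via a cutoff in $t=\abs{y}/\beta(x)$, and then, on the \emph{inner third} of that neighbourhood (where the modified first component already equals $x$), blends $v$ with $y/(2M)$. The nested two-stage structure is the crucial trick: when you modify $v$, you already have $\tilde u_x=1$ and $\tilde u_y=0$, so the Jacobian reduces to $\tilde v_y$, whose positivity is easy. All cross-terms are controlled using only $\norm{Df}\le M$, $\det Df\ge M^{-1}$, and the identities $u(x,0)=x$, $v(x,0)=0$; no second derivatives and no Whitney extension are needed, and the constant $20M$ falls out explicitly.

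Your argument, as written, has a real gap at the step where you assert condition (c). A convex interpolation $\mu_0=\eta\,\partial_y F_1+(1-\eta)\,\partial_y F_2$ does keep $\mu_0^{(2)}\ge (2M)^{-1}$, but the correction you add to enforce (b) is \emph{not} mean-zero (a mean-zero correction would not change the integral), and its second component must absorb
\[
\int_0^\delta (1-\eta(s/\delta))\bigl(\partial_y F_1^{(2)}-\partial_y F_2^{(2)}\bigr)\,ds,
\]
which for a generic fixed profile $\eta$ is of order $M\delta$. A bump realising this integral on an interval of length $\sim\delta$ then has $L^\infty$ norm $\sim M$, and $M\gg (2M)^{-1}$ for large $M$: the lower bound on $\mu^{(2)}$ is destroyed. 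This is fixable---make $\eta$ transition on an interval of width $\epsilon\sim M^{-2}$ around $0$, so that the required correction drops to $O(\epsilon M)\ll M^{-1}$---but that choice must be made and is precisely the step you have not supplied. A smaller issue: a constant-width strip $\{\abs{y}<\delta\}\cap\U$ need not fit inside an arbitrary $\U_\circ$ near the endpoints of $I$; the paper handles this by allowing the width $\beta(x)$ to vary and vanish there.
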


\begin{proof} Without loss of generality $I\subset \R=\{(x,y) \colon y=0\}$. We write $f$ in components as $(u,v)$ where $u$ and $v$
are functions of $x$ and $y$. Let us introduce a notation; given any $\mathscr C^{\infty}$-smooth function $\beta\colon\R\to[0,\infty)$, denote 
$V(\beta)=\{(x,y)\in \R^2\colon \abs{y}<\beta(x)\}$. We can and do choose $\beta$ so that
$I\subset V(\beta)\subset \U_\circ$, and further scale it down until the following holds.
\begin{equation}\label{beta}
\begin{split}
\abs{\beta'(x)} &\le\frac{1}{40M} \qquad \mbox{ for all } x\in \R; \\
\abs{v_x}&\le \frac{1}{50M^2} \qquad \text{in }V(\beta)\setminus I, \quad \mbox{ because } v(x,0)=0; \\
\abs{u_x-1}&\le \frac{1}{10} \qquad \quad \;  \text{ in }V(\beta)\setminus I, \quad \mbox{ because } u(x,0)=x.
\end{split}
\end{equation}
As a consequence of ~\eqref{mun2} and~\eqref{beta},
\begin{equation}\label{beta1}
v_y \ge \frac{M^{-1}-\abs{u_y v_x}}{u_x}\ge \frac{1}{2M}.
\end{equation}
Since $v$ is also $M$-Lipschitz by~\eqref{mun2}, the following double inequality holds in $V(\beta)\setminus I$.
\begin{equation}\label{vest}
\frac{1}{2M} \le \frac{v}{y}\le M.
\end{equation}

Let us fix  be a nondecreasing $\mathscr C^{\infty}$ function $\alpha\colon\R\to\R$ such that $\alpha(t)=0$ for $t\le 1/3$. Let
$\alpha(t)=1$ for $t\ge 2/3$. Moreover, $\alpha'(t)\le 4$ for all $t\in\R$ and $\alpha(\infty)=1$, by convention.  Now we introduce a modification of $u$ on  $\U$ by setting
\[
\tilde u := \alpha(t)u+(1-\alpha(t))x \qquad \text{where } t=\begin{cases}\frac{\abs{y}}{\beta(x)} \quad & \mbox{ if } \beta(x) \ne 0 \\
\infty & \mbox{ otherwise} \end{cases}.
\]
Note that $\tilde u=u$ outside of $V(\beta)$. In $V(\beta)\setminus I$ we compute the derivatives
as follows.
\begin{equation}\label{d1}
\begin{split}
\tilde u_x &= - t^2  \alpha'(t) \beta'(x) \frac{u-x}{\abs{y}} + \alpha(t)u_x+1-\alpha(t) \\
\tilde u_y &= t\alpha'(t)\frac{u-x}{y} + \alpha(t) u_y
\end{split}
\end{equation}
Since $u$ is $M$-Lipschitz by~\eqref{mun2}, we have $\abs{u-x}\le M\abs{y}$. From this,~\eqref{beta}
and~\eqref{d1} we obtain
\begin{equation}\label{d2}
\frac{8}{10} \le \tilde u_x\le \frac{12}{10},
\quad \text{ and } \quad \abs{\tilde u_y}\le 5M,
\end{equation}
which combined with~\eqref{beta1} yields
\begin{equation}\label{d3}
\tilde u_x v_y-\tilde u_y v_x \ge \frac{8}{10}\frac{1}{2M}- \frac{5M}{50M^2}= \frac{3}{10 M}.
\end{equation}

Next we modify $v$ on $\U$. Specifically,
\[
\tilde v := \alpha(s) v+(1-\alpha(s))\frac{y}{2M} \qquad \text{where } s= \begin{cases} \frac{3\abs{y}}{\beta(x)} \quad & \mbox{ if } \beta(x) \ne 0 \\
\infty& \mbox{ otherwise}  \end{cases}
\]
Note that $\tilde{v}=v$ outside of $V(\beta/3)$, and on the set $V(\beta/3)$ we already have $\tilde{u} \equiv x$.

Computations similar to~\eqref{d1} yield (on the set $V(\beta/3)\setminus I$)
\begin{equation}\label{d1v}
\begin{split}
\tilde v_x &= -\frac{1}{3}\alpha'(s) s^2 \frac{v-y}{\abs{y}} + \alpha(s)v_x; \\
\tilde v_y &= \frac{s\alpha'(s)}{y}\left(v-\frac{y}{2M}\right) + \alpha(s)v_y+\frac{1-\alpha(s)}{2M}.
\end{split}
\end{equation}
Straightforward estimates based on~\eqref{beta},~\eqref{beta1} and~\eqref{vest} comply
\begin{equation}\label{d4}
\begin{split}
\abs{\tilde v_x}& \le \frac{4 M}{3}+\frac{1}{50M^2}<\frac{3M}{2}, \\
\frac{1}{2M}& \le \tilde v_y \le  5M.
\end{split}
\end{equation}

It remains to check that the mapping $g:=(\tilde u,\tilde v)$, which agrees with $f$ outside of $V(\beta)$, satisfies all
the requirements. As regards $\mathscr C^\infty$-smoothness we need only check it on $V(\beta/9)$. But in this neighborhood of $I$ we have a linear mapping, $g(x,y)= \left(x, \frac{y}{2M}\right)$, so $\mathscr C^\infty$-smooth. By virtue of~\eqref{d2} and~\eqref{d4} we have $\norm{Dg}\le 20M$. The desired lower bound for
$\det Dg$ follows from~\eqref{d3} and~\eqref{d4}. Consequently, $g$ is a local diffeomorphism, and since
it agrees with $f$ on $\partial V(\beta)$, it is in fact a diffeomorphism, by a topological result: {\it a local homeomorphism which shares boundary values with a homeomorphism is injective}~\cite[Lemma 8.2]{Mu}.
\end{proof}

We also need a polar version of Proposition~\ref{mun}. 

\begin{corollary}\label{muncir} Let $\U\subset\R^2$ be a domain containing a circle $\T$.
Suppose that $f\colon \U\onto \U^*\subset\R^2$ is a homeomorphism with the following properties:
\begin{enumerate}[(i)]
\item\label{munc1} The restriction of $f$ to $\T$ is the identity mapping;
\item\label{munc2} There is a constant $M>0$ such that the restriction of $f$ to either component of 
$\U\setminus \T$ is $\mathscr C^{\infty}$-smooth with $\norm{Df}\le M$ and $\det Df\ge M^{-1}$.
\end{enumerate}
Then for any open set
$W$ with $\T\subset W\subset \U$ there is a $\mathscr C^{\infty}$-diffeomorphism $g\colon \U\to \U^*$ such that
\begin{itemize}
\item $g$ agrees with $f$ on $\U\setminus W$ and on $\T$;
\item $\norm{Dg}\le 80M$ and $\det Dg\ge (80M)^{-1}$ on $\U$.
\end{itemize}
\end{corollary}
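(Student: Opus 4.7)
The plan is to reduce to Proposition~\ref{mun} by lifting through a universal cover that unrolls $\T$ into a straight line. After translating and rescaling, assume $\T$ is the unit circle centered at the origin, and consider the $2\pi$-periodic covering
\[
\Phi\colon \R\times(-\epsilon,\epsilon)\longrightarrow A,\qquad \Phi(s,t)=e^{-t}(\cos s,\sin s),
\]
whose image is the annulus $A=\{e^{-\epsilon}<\abs z<e^{\epsilon}\}$ and whose differential is $e^{-t}$ times a rotation. I would fix $\epsilon>0$ so small that $A\subset W$, that $f(\overline A)\subset\{e^{-2\epsilon}\le\abs z\le e^{2\epsilon}\}$ (possible because $f$ is continuous and equals the identity on $\T$), and that $e^{6\epsilon}\le 2$. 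The bounds $\norm{D\Phi^{\pm 1}}\in[e^{-\epsilon},e^{\epsilon}]$ and $\abs{\det D\Phi^{\pm 1}}\in[e^{-2\epsilon},e^{2\epsilon}]$ then hold on every strip under consideration.

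Since $f$ fixes $\T$ pointwise, $f|_A$ lifts unambiguously through $\Phi$ to a homeomorphism $\tilde f=\Phi^{-1}\circ f\circ\Phi$ of $\R\times(-\epsilon,\epsilon)$ that is $2\pi$-equivariant in $s$, is the identity on the line $I:=\R\times\{0\}$, and is $\mathscr{C}^\infty$-smooth on each of the two open half-strips $\{t>0\}$ and $\{t<0\}$. The chain rule combined with the choice of $\epsilon$ yields $\norm{D\tilde f}\le 2M$ and $\det D\tilde f\ge(2M)^{-1}$ on each half-strip. I would then apply Proposition~\ref{mun} to $\tilde f$ with $I$ as the dividing curve and $\U_\circ:=\Phi^{-1}(W)\cap[\R\times(-\epsilon,\epsilon)]$ as the prescribed neighborhood. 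Two small points need a moment of care. First, here $I$ is a full line rather than a segment with endpoints on $\partial\U$; however, the proof of Proposition~\ref{mun} is purely local in a thin strip $V(\beta)$ around $I$ and only uses that $\U\setminus I$ has exactly two components, which is still the case. Second, the output must remain $2\pi$-periodic so that it descends to the annulus; this is achieved by choosing the auxiliary cutoff $\beta$ in that proof to be a small positive constant, so that the entire construction commutes with the translation $s\mapsto s+2\pi$. The outcome is a $2\pi$-periodic $\mathscr{C}^\infty$-diffeomorphism $\tilde g$ of the strip agreeing with $\tilde f$ outside $\U_\circ$ and satisfying $\norm{D\tilde g}\le 40M$ and $\det D\tilde g\ge(40M)^{-1}$.

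Finally, by periodicity $\tilde g$ descends through $\Phi$ to a smooth diffeomorphism $g_A:=\Phi\circ\tilde g\circ\Phi^{-1}\colon A\to f(A)$ that coincides with $f$ on $A\setminus W$ and is the identity on $\T$. Extending by $f$ outside $A$ produces the desired global $\mathscr{C}^\infty$-diffeomorphism $g\colon\U\to\U^{*}$; a single application of the chain rule, together with $e^{6\epsilon}\le 2$, yields $\norm{Dg}\le e^{3\epsilon}\cdot 40M\le 80M$ and $\det Dg\ge e^{-6\epsilon}\cdot(40M)^{-1}\ge(80M)^{-1}$.

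The main obstacle I anticipate is bookkeeping rather than substance: verifying that Proposition~\ref{mun} applies when the dividing set is an unbounded line (its proof is local, hence unaffected) and ensuring that the smoothing commutes with the deck transformation $s\mapsto s+2\pi$ (which is handled by freezing $\beta$ to a constant). With those two points in hand, the uniform derivative control on the covering makes the factor $80$ in the statement a comfortable slack for the inflation incurred by passing to and from the cover.
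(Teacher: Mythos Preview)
Your proposal is correct and follows essentially the same route as the paper: lift through the exponential covering $\psi(\zeta)=\exp(i\zeta)$ (your $\Phi$ is exactly this map under $\C\simeq\R^2$), obtain a $2\pi$-periodic homeomorphism on a thin horizontal strip that is the identity on $\R$, run the construction of Proposition~\ref{mun} with $\beta$ taken to be a small positive constant so that periodicity is preserved, and push the result back down. Your bookkeeping of the constants is in fact more explicit than the paper's; the only minor imprecision is the phrase ``agrees with $f$ on $A\setminus W$'' (which is vacuous since $A\subset W$)---what you actually use, and what makes the gluing smooth, is that $\tilde g=\tilde f$ outside $V(\beta)$ with $\beta<\epsilon$, so $g_A=f$ near $\partial A$.
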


\begin{proof} It is convenient to identify $\R^2$ with $\C$. 
Without loss of generality $\T=\{z\in\C\colon \abs{z}=1\}$. Let $\psi(\zeta)=\exp(i\zeta)$. 
The mapping $F=\psi^{-1}\circ f\circ \psi$
is well-defined in some open horizontal strip $S_h=\{z\in\C\colon \abs{\im z}<\epsilon \}$ 
which we choose thin enough so that $\psi(S_\epsilon)\subset W$ and $\abs{\psi'}^2< e^{2\epsilon }\le 2$.
Note that $F$ is $2\pi$-periodic and satisfies
\[\norm{DF}\le 2M \quad \text{ and }\quad \det DF\ge (2M)^{-1}.\]

The proof of Proposition~\ref{mun} applies to $F$ with no changes other than one simplification: 
$\beta>0$ is now a small positive constant rather than a function. 
Thus we obtain a diffeomorphism $G$ which agrees with $F$ on
$\R\cup (S\setminus V(\beta))$ and satisfies $\norm{DG}\le 40$ and $\det DG\ge (40M)^{-1}$.
Since $F$ was $2\pi$-periodic, so is $G$. Thus, $g:=\psi\circ G\circ \psi^{-1}$ is the desired diffeomorphism.
\end{proof}

Our applications require slightly more general versions of Proposition~\ref{mun} and Corollary~\ref{muncir}, where the separating curve is allowed to have other shapes and $f$ is not required to agree with the identity on the curve. 

\begin{definition}\label{nice}
A parametric curve $\Gamma \colon (0,1) \to \R^2$ is {\it regular} if $\Gamma$ extends to a bigger interval 
$(a,b)\supset [0,1]$ so that the extended mapping is a $\mathscr C^\infty$-diffeomorphism onto its image. 
\end{definition}

Note that a regular curve $\Gamma$ has well-defined endpoints $\Gamma(0)$ and $\Gamma(1)$. Also,  
$\Gamma$ extends to an injective $\mathscr C^\infty$-mapping $\Phi\colon (0,1)\times (-1,1)\to\R^2$
such that $\norm{D\Phi}$ and $\norm{(D\Phi)^{-1}}$ are bounded. This follows from the existence of a tubular neighborhood
of the image of $\Gamma$~\cite[Theorem 4.26]{MRb}.  

Corollaries~\ref{muncor} and~\ref{muncircor}, given below, generalize Proposition~\ref{mun} and  Corollary~\ref{muncir} 
respectively. 

\begin{corollary}\label{muncor}  
Let $\U \subset \R^2$ be a domain containing the image of a regular arc $\Gamma$ with endpoints on the boundary $\partial \U$ which divides $\U$ into two subdomains $\U_1$ and $\U_2$ such that $\U \setminus \Gamma=\U_1 \cup \U_2$. Suppose $f \colon \overline{\U} \to \overline{\U^\ast} \subset \R^2$ is a homeomorphism such that $f \circ \Gamma$ is also regular and the restriction of $f$ to each $\overline{\U_i}$ is $\mathscr C^\infty$-smooth and satisfies
\[ \abs{Df(z)} \le M, \quad \det Df(z) \ge \frac{1}{M} \quad \mbox{for } z\in \U_i\]
where $M$ is a positive constant. Then there is a constant $M'>0$ such that to every open set $\U' \subset \U$ with $\Gamma \subset \U'$ there corresponds a $\mathscr C^\infty$-diffeomorphism $g\colon \U \onto \U^\ast$ with the following properties
\begin{itemize}
\item $g(z)=f(z)$ for $z\in \U \setminus \U'$ (and also on $\Gamma$)
\item $\abs{Dg(z)} \le M'$ and $\det Dg(z) \ge \frac{1}{M'}$ on $\U$.
\end{itemize}
\end{corollary}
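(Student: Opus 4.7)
The plan is to reduce Corollary~\ref{muncor} to Proposition~\ref{mun} by a smooth change of coordinates that simultaneously straightens $\Gamma$ in the source and $f\circ\Gamma$ in the target; after the change of coordinates the transported $f$ will be the identity along the straightened segment, which is exactly the hypothesis Proposition~\ref{mun} requires.

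\textbf{Step 1 (tubular straightening).} Since $\Gamma$ is regular, the tubular neighborhood remark following Definition~\ref{nice} provides an injective $\mathscr{C}^\infty$ map $\Phi\colon(0,1)\times(-\eta,\eta)\to\R^2$ with $\Phi(t,0)=\Gamma(t)$ and uniform bounds $\norm{D\Phi},\norm{(D\Phi)^{-1}}\le L$. Applied to the (also regular) arc $f\circ\Gamma$, the same remark yields $\Psi\colon(0,1)\times(-\eta,\eta)\to\R^2$ with $\Psi(t,0)=f(\Gamma(t))$ and analogous bounds. By shrinking $\eta$ I arrange that $\Phi((0,1)\times(-\eta,\eta))\subset \U\cap \U'$ and that $f$ carries this set into the range of $\Psi$; such $\eta$ exists because $\U,\U'$ are open and $f$ is continuous.

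\textbf{Step 2 (apply Proposition~\ref{mun}).} With $V:=(0,1)\times(-\eta,\eta)$ and $I:=(0,1)\times\{0\}$, set $F:=\Psi^{-1}\circ f\circ\Phi\colon \overline{V}\onto\overline{F(V)}$. By construction $F|_I=\id$; the chain rule together with the hypotheses on $f$ gives $\norm{DF}\le L^{2}M$ and $\det DF\ge (L^{4}M)^{-1}$ on each of the two components of $V\setminus I$ (namely $\Phi^{-1}(\U_1)\cap V$ and $\Phi^{-1}(\U_2)\cap V$), and $F$ is $\mathscr{C}^\infty$-smooth up to $I$ on each side. Proposition~\ref{mun} then supplies, for any preassigned open neighborhood $V_\circ$ of $I$ inside $V$, a $\mathscr{C}^\infty$-diffeomorphism $G\colon V\to F(V)$ that agrees with $F$ on $V\setminus V_\circ$ and on $I$, with $\norm{DG}\le 20L^{2}M$ and $\det DG\ge(20L^{4}M)^{-1}$.

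\textbf{Step 3 (push back and glue).} Choose $V_\circ$ so that $\overline{V_\circ}\subset V$, and define
\[
g(z):=\begin{cases}\Psi\bigl(G(\Phi^{-1}(z))\bigr), & z\in \Phi(V),\\ f(z), & z\in \U\setminus \Phi(V).\end{cases}
\]
Since $G=F$ on $V\setminus V_\circ$, the composition $\Psi\circ G\circ\Phi^{-1}$ equals $f$ on an open collar of $\partial \Phi(V)$, so the two pieces match smoothly and $g$ is $\mathscr{C}^\infty$ on all of $\U$ (smoothness across $\Gamma$ is automatic because $\Gamma\subset \Phi(V)$, where $g$ is a composition of three diffeomorphisms; away from $\Gamma$, $f$ is itself smooth). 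Injectivity of $g$ on $\Phi(V)$ is inherited from $G$, and injectivity across $\partial \Phi(V)$ from $f$. The chain rule applied to $g=\Psi\circ G\circ\Phi^{-1}$ converts the bounds on $G$ into $\abs{Dg}\le M'$ and $\det Dg\ge 1/M'$ with $M'$ depending only on $L$ and $M$, which is the required estimate.

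\textbf{Main obstacle.} The nontrivial point is the tubular compatibility in Step 1: the single diffeomorphism $\Psi$ must cover the $f$-image of a full tubular neighborhood of $\Gamma$, not merely of $\Gamma$ itself, and the constant $\eta$ might not suffice near points where $\U$ or $\U'$ is narrow. This is handled by the same device used inside Proposition~\ref{mun}: allow the vertical extent to depend on $t$ via a smooth positive function $\beta(t)$, chosen small enough that $\Phi(\{(t,s):|s|<\beta(t)\})\subset\U\cap\U'$ and $f$ carries it into the range of $\Psi$. After this bookkeeping, the remaining work is purely mechanical.
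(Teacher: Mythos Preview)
Your proposal is correct and follows essentially the same approach as the paper: straighten both $\Gamma$ and $f\circ\Gamma$ via tubular-neighborhood diffeomorphisms $\Phi,\Psi$, form $F=\Psi^{-1}\circ f\circ\Phi$ (which is the identity on the segment), apply Proposition~\ref{mun}, and conjugate back. The paper's version is terser---it simply posits a domain $\widetilde{\U}$ containing $(0,1)\times\{0\}$ with $\Phi(\widetilde{\U})\subset\U'$ and $F$ defined on $\widetilde{\U}$---whereas you spell out the bounds explicitly and flag (and resolve) the variable-width issue near the arc's endpoints, which the paper leaves implicit in its choice of $\widetilde{\U}$.
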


\begin{proof}  Let $\Q=(0,1)\times (-1,1)$. Let $\Phi$ and $\Psi$ be the extensions of $\Gamma$
and $f\circ \Gamma$ to $\Q$ as in Definition~\ref{nice}. There is a domain $\widetilde{\U}$ such that 
$(0,1)\times \{0\}\subset \widetilde{\U} \subset \Q$, $\Phi(\widetilde{\U})\Subset \U'$, and 
the composition $F:=\Psi^{-1}\circ f\circ \Phi$ is defined in $\widetilde{\U}$. Note that $F=\id$ on $(0,1)\times \{0\}$. 
We apply Proposition~\ref{mun} (with  $\widetilde{\U}$ in place of $\U$ and with 
$F$ in place of $f$) and obtain a $\mathscr C^\infty$-diffeomorphism $G \colon \widetilde{\U} \to F(\widetilde{\U})$. Finally, replace $F$ within $\widetilde{\U}$ with the diffeomorphism $g=\Psi\circ G\circ \Phi^{-1}$. 
\end{proof}

\begin{corollary}\label{muncircor}  
Let $\U \subset \R^2$ be a domain containing the image of a $\mathscr C^\infty$-smooth Jordan curve $\Gamma$  which divides $\U$ into two subdomains $\U_1$ and $\U_2$ such that $\U \setminus \Gamma=\U_1 \cup \U_2$. Suppose $f \colon \overline{\U} \to \overline{\U^\ast} \subset \R^2$ is a homeomorphism such that the restriction of $f$ to each $\overline{\U_i}$ is $\mathscr C^\infty$-smooth and satisfies
\[ \abs{Df(z)} \le M, \quad \det Df(z) \ge \frac{1}{M} \quad \mbox{for } z\in \U_i\]
where $M$ is a positive constant. Then there is a constant $M'>0$ such that to every open set $\U' \subset \U$ with $\Gamma \subset \U'$ there corresponds a $\mathscr C^\infty$-diffeomorphism $g\colon \U \onto \U^\ast$ with the following properties
\begin{itemize}
\item $g(z)=f(z)$ for $z\in \U \setminus \U'$ (and also on $\Gamma$)
\item $\abs{Dg(z)} \le M'$ and $\det Dg(z) \ge \frac{1}{M'}$ on $\U$.
\end{itemize}
\end{corollary}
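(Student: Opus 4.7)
The plan is to reduce Corollary~\ref{muncircor} to Corollary~\ref{muncir} via compatible tubular neighborhood charts, in direct parallel to how Corollary~\ref{muncor} was reduced to Proposition~\ref{mun}.

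First I would invoke the tubular neighborhood theorem for the $\mathscr C^\infty$-smooth Jordan curve $\Gamma$ to obtain a $\mathscr C^\infty$-diffeomorphism $\Phi$ from an open annular neighborhood $A$ of the unit circle $\T=\{|\zeta|=1\}$ onto an open neighborhood $N_\Gamma\Subset \U'$ of $\Gamma$, with $\Phi(\T)=\Gamma$. Next, since $f|_{\overline{\U_1}}$ is a smooth diffeomorphism up to $\Gamma$, the image $f(\Gamma)$ is itself a $\mathscr C^\infty$-smooth Jordan curve in $\U^\ast$; I would then choose a tubular chart $\Psi\colon A'\onto N_{f(\Gamma)}$ around $f(\Gamma)$ \emph{compatible} with $\Phi$, namely one whose restriction to $\T$ coincides with the regular embedding $\zeta\mapsto f(\Phi(\zeta))$. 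This is possible because $f\circ\Phi|_\T$ is a smooth regular parametrization of $\T\to f(\Gamma)$ (smoothness on $\T$ holds even though $f$ is only one-sided smooth across $\Gamma$, since the smooth one-sided extension from either $\overline{\U_i}$ traces the same smooth curve with nondegenerate tangent), and the tubular neighborhood theorem extends any such parametrization to an annular diffeomorphism.

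With $\Phi$ and $\Psi$ so chosen, the conjugated mapping $F:=\Psi^{-1}\circ f\circ \Phi$ is defined on $A$, equals the identity on $\T$, and is $\mathscr C^\infty$-smooth on each component of $A\setminus \T$ with $\norm{DF}\le \widetilde M$ and $\det DF\ge 1/\widetilde M$, where $\widetilde M$ depends only on $M$ and on the operator norms of $D\Phi^{\pm1}$ and $D\Psi^{\pm1}$ on the relevant compact sets. I would then apply Corollary~\ref{muncir} to $F$ with $W:=\Phi^{-1}(\U'\cap N_\Gamma)$, obtaining a $\mathscr C^\infty$-diffeomorphism $G\colon A\to F(A)$ that coincides with $F$ on $\T\cup(A\setminus W)$ and satisfies $\norm{DG}\le 80\widetilde M$, $\det DG\ge 1/(80\widetilde M)$. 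The desired diffeomorphism is then
\[
g(z):=\begin{cases}\Psi\bigl(G(\Phi^{-1}(z))\bigr), & z\in N_\Gamma,\\ f(z), & z\in \U\setminus N_\Gamma,\end{cases}
\]
and the bounds $\norm{Dg}\le M'$, $\det Dg\ge 1/M'$ follow by the chain rule applied piecewise, with $M'$ absorbing $\widetilde M$ together with the operator norms of $\Phi$, $\Psi$ and their inverses.

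The main obstacle I anticipate is precisely the compatibility requirement $\Psi|_\T=f\circ\Phi|_\T$: without it the conjugated map $F$ would fail to be the identity on $\T$, and Corollary~\ref{muncir} could not be invoked directly. Everything else is a routine bookkeeping exercise parallel to the derivation of Corollary~\ref{muncor} from Proposition~\ref{mun}.
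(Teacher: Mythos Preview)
Your proposal is correct and is exactly the adaptation the paper has in mind: the paper's proof of Corollary~\ref{muncircor} simply says to adapt the proof of Corollary~\ref{muncor}, and your reduction to Corollary~\ref{muncir} via compatible annular tubular charts $\Phi,\Psi$ (with $\Psi|_{\T}=f\circ\Phi|_{\T}$) is precisely that adaptation. One minor bookkeeping point: since you took $N_\Gamma\Subset\U'$, your choice $W=\Phi^{-1}(\U'\cap N_\Gamma)$ equals all of $A$, which would not leave a collar on which $G=F$ for the gluing; instead take any $W$ with $\T\subset W\Subset A$ (then $\Phi(W)\subset N_\Gamma\subset\U'$ still gives $g=f$ on $\U\setminus\U'$, and $G=F$ on the open set $A\setminus W$ ensures smooth patching across $\partial N_\Gamma$).
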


\begin{proof} The proof of Corollary~\ref{muncor} is easily adapted to this case. 
\end{proof}

\section{Concluding remarks}

One may wonder whether the proof of Theorem~\ref{thmapprox} can be extended to the spaces $\W^{1,p}$, $1<p<\infty$, by means of the $p$-harmonic replacement in place of Proposition~\ref{RKC}. Indeed, $p$-harmonic mappings are $\mathscr C^{1, \alpha}$-smooth~\cite{Ur}. However, the injectivity of $p$-harmonic replacement of a homeomorphism is unclear.
\begin{question}
Is there a version of the  Rad\'o-Kneser-Choquet theorem for $p$-harmonic mappings? That is, does the $p$-harmonic 
extension of a homeomorphism onto a convex Jordan curve  enjoy the injectivity property? 
\end{question}

An attempt to extend Theorem~\ref{thmapprox} to higher dimensions faces another obstacle: the  
Rad\'o-Kneser-Choquet theorem fails in dimensions $n\ge 3$ as was proved by Laugesen~\cite{La}.

\bibliographystyle{amsplain}

\end{document}